\newtheorem{theorem}{Theorem}[section]
\newtheorem{lemma}[theorem]{Lemma}
\newtheorem{proposition}[theorem]{Proposition}
\newtheorem{corollary}[theorem]{Corollary}
\theoremstyle{definition}
\newtheorem{definition}[theorem]{Definition}
\newcommand{\la}{\lambda}
\newcommand{\In}{I_i^{n}}
\newcommand{\weak}{\rightharpoonup^*}
\newcommand{\prob}{\mathcal{P}(\overline{I})}
\newcommand{\Fi}{F_{\infty}}
\newcommand{\Fn}{F_n}
\def\XXint#1#2#3{{\setbox0=\hbox{$#1{#2#3}{\int}$} \vcenter{\vspace{-1pt}\hbox{$#2#3$}}\kern-.5\wd0}}
\def\Xint#1{\mathchoice {\XXint\displaystyle\textstyle{#1}}{\XXint\textstyle\scriptstyle{#1}}{\XXint\scriptstyle\scriptscriptstyle{#1}}{\XXint\scriptscriptstyle\scriptscriptstyle{#1}}\!\int}
\def\intmed{\Xint{-}}
\title{Spectral partitions for Sturm-Liouville problems}
\author{Paolo Tilli and Davide Zucco}
\address[Paolo Tilli]{Dipartimento di Scienze Matematiche, Politecnico di Torino, Italy}
\email{paolo.tilli@polito.it}
\address[Davide Zucco]{Istituto Nazionale di Alta Matematica, Unit\`a di Ricerca del Dipartimento di Scienze Matematiche, Politecnico di Torino, Italy}
\email{davide.zucco@polito.it}
\begin{document}

\begin{abstract}
We look for best partitions of the unit interval that minimize certain functionals defined in terms of the eigenvalues of Sturm-Liouville problems. Via $\Gamma$-convergence theory, we study the asymptotic distribution of the minimizers as the number of intervals of the partition tends to infinity. Then we discuss several examples that fit in our framework, such as the sum of (positive and negative) powers of the eigenvalues and an approximation of the trace of the heat Sturm-Liouville operator.
\end{abstract}

\maketitle

\section{Introduction}
We consider new minimization problems for the eigenvalues of Sturm-Liouville operators in bounded intervals. 
Throughout the paper, we assume we are given:
\begin{itemize}
\item[-] The unit interval $I:=(0,1)\subset \mathbb R$.
\item[-] A constant $1<\beta<+\infty$.
\item[-] A function $q\in L^\infty(\mathbb R)$ such that $0\leq q(x)\leq \beta$ for a.e. $x\in \mathbb R$.
\item[-] Two functions $p,w\!\in\! L^\infty(\mathbb R)$ such that $1/\beta\leq p(x)\leq \beta$ and $1/\beta \leq w(x)\leq\beta$. 
It is also convenient to introduce the function $s\in L^\infty(\mathbb R)$ defined by 
\begin{equation}\label{s}
s(x):=\sqrt{\frac{w(x)}{p(x)}},
\end{equation}
(note that $1/\beta\leq s(x)\leq \beta$).
\item[-] A function $\varphi\colon [0,+\infty)\to [0,+\infty]$ strictly convex, lower semicontinuous and with \emph{non-linear} growth, such that
$$\varphi^\infty:=\lim_{t\to+\infty} \frac{\varphi(t)}{t}=\sup_{t>0}  \frac{\varphi(t)}{t}\in \{0,+\infty\},$$
i.e., its recession factor is either null or infinite.
\end{itemize}
Under these assumptions, for any open bounded set $A\subset \mathbb R$ ($A\neq\emptyset$) we may define the \emph{first eigenvalue} of the Sturm-Liouville operator with coefficients $p$, $q$, $w$, via its variational characterization
\begin{equation}\label{eigenvalue}
\la(A):= \min_{\begin{subarray}{c}u\in H^1_0(A)\\ u\neq 0\end{subarray}} \frac{\int_{A} p(x)u'(x)^2dx+ \int_A q(x) u(x)^2 dx}{\int_{A}w(x)u(x)^2 dx}.
\end{equation}
The \emph{first eigenfuction} $u_A$ is a non-negative function (unique, up to a multiplicative constant, when $A$ is connected) which minimizes this quotient and, in particular, solves (in a weak sense) the Sturm-Liouville equation 
\begin{equation*}
-(p u_{A}')'+qu_{A}=\la({A})w u_{A}, \quad \text{in $A$}
\end{equation*}
with the boundary Dirichlet condition $u_{A}=0$ on $\partial A$ (see \cite{couhil} and also \cite{atkmin,zettl} for extended presentations on the Sturm-Liouville theory). As usual, when $A=\emptyset$ we set $\la(A):=+\infty$.

Moreover, for any positive integer $n$ we introduce the class
\[
\mathcal C_n:=\left\{\{I_j\}_{j=1}^n: \  \text{$I_j=(x_{j-1},x_{j})$ with $x_{j-1}\leq x_{j}$ for all $j$, $x_0:=0$ and $x_{n}:=1$}\right\} 
\]
of partitions of the unit interval $I$ made up of $n$ open intervals (notice that some intervals
may be empty, while the non-empty ones are necessarily disjoint). Then, to any interval $I_j$ of a partition in $\mathcal C_n$, we may associate the real number $\la(I_j)$, corresponding to the first eigenvalue of the set $I_j$ as defined in \eqref{eigenvalue} by choosing $A=I_j$, and build up minimization problems with functionals defined in terms of these eigenvalues.
More specifically, for a given natural number $n$ we study the minimization problem
\begin{equation}\label{problem}
\min \bigg\{\frac{1}{n}\sum_{1\leq j\leq n} \varphi\left(\frac{n}{\la(I_j)^{1/2}}\right) : \ \{I_j\}\in \mathcal C_n \bigg\},
\end{equation}
namely we are interested in the best partition of the unit interval, by means of $n$ intervals, so as to minimize a cost functional defined in terms of the eigenvalues of a Sturm-Liouville operator. Any partition $\{I_j\}\in \mathcal C_n$ can be induced by a set of $n-1$ (not necessarily distinct) points in $\overline I$ and one may equivalently regard \eqref{problem} with points as unknowns. Therefore, according to whether one wishes to optimize among partitions or points, problem \eqref{problem} is a matter of optimal \emph{partition} or of optimal \emph{location} (see
\cite{caflin,coteve,hehote} for some \emph{optimal partition problems} with cost functionals depending on the eigenvalues of the Laplacian in higher dimension and \cite{bojima2, busava, lmsz, suzdre, suzoka} for some \emph{optimal location problems} of non-spectral cost functionals). Problem \eqref{problem} can also be seen as a one-dimensional version of the problems introduced in \cite{henzuc, tilzuc, tilzuc2}, where the issue was how to best place a Dirichlet boundary condition in a two-dimensional membrane in order to optimize the first eigenvalue of an elliptic operator.

Some possible physical interpretations of problem \eqref{problem} are as follows.
In acoustics, $I$ represents a non-homogeneous string (of density $w$, Young modulus $p$ and subjected to the potential $q$) fixed at its endpoints with a frequency of vibration proportional to the first eigenvalue ${\la(I)^{1/2}}$. 
By adding $n-1$ extra points (nails) in the middle, where the string will be supplementarily fixed, the whole string will then vibrate according to the $n$ independent substrings $I_j$. 
Therefore, we are asking where best to locate the points so as to reinforce a string by optimizing suitable combinations of the frequencies of vibration of each substring (actually we consider reciprocal frequencies, namely {periods}). 
Moreover, in the framework of quantum mechanics, the Sturm-Liouville operator with $p=w=1$ reduces to the Schr\"odinger operator. The first eigenvalue $\la(I)$ corresponds to the \emph{ground state} of a quantum mechanical system is its lowest-energy state, namely it represents the lower energy level, in atomic units, of a quantum particle with a potential $q$ which becomes infinite outside $I$. Therefore, we wonder how to best trap quantum particles in $n$ subregions $I_j$'s to optimize their ground states. 

{The existence of an optimal partition for \eqref{problem} follows from the lower semicontinuity of $\varphi$ and the definition of $\mathcal C_n$. Moreover, by strict convexity of $\varphi$, every optimal partition is necessarily made up of $n$ \emph{distinct} open intervals, i.e. it has no empty intervals (but we do not expect uniqueness of the minimizer for arbitrary data $p,q,w$ and $\varphi$). It is however not clear if there is some monotonicity with respect to $n$. This also motivates the aim of the paper: analyze the asymptotic distribution of the sets $I_j$ inside the unit interval $I$ as $n\to\infty$.} As $n$ increases, any information concerning the density, i.e., number of intervals of $I_j$'s for unit length, is lost.
For optimal location problems (see for instance \cite{bojima2, busava}) a common strategy used to retrieve this information is to prove a $\Gamma$-convergence result in the space of probability measures, identifying each set of $n-1$ points with the sum of $n-1$ Dirac deltas supported on this set of points. In our setting a natural choice would be to associate a Dirac delta centered at  any interval of the partition. However, as we will see in the proof of the $\Gamma$-convergence result (see Section~\ref{sec.3}), it is more convenient to associate a probability measure that is concentrated on the whole interval $\overline I$. For this reason to any partition $\{I_j\}\in \mathcal C_n$ we associate the probability measure
\begin{equation}\label{meas}
 \mu_{\{I_j\}}=\frac{1}{n}\sum_{\begin{subarray}{c} 1\leq j\leq n\\ x_{j-1}<x_{j}\end{subarray}}\frac{1}{\mathcal L(I_j)}\chi_{I_j}(x) \mathcal L+\frac{1}{n}\sum_{\begin{subarray}{c} 1\leq j\leq n\\ x_{j-1}=x_{j}\end{subarray}}\delta_{x_j},
\end{equation}
where $\mathcal L$ is the Lebesgue measure and $\delta_x$ the Dirac delta at $x$. Namely, we define a measure which has constant density $1/\mathcal L(I_j)$ on every interval $I_j$ of the partition (when the length of the interval is zero this has to be meant as Dirac delta).
The normalization factor $n^{-1}$ in \eqref{meas} provides a probability measure. 
Moreover, rescaling by $n$ in \eqref{problem} serves to prevent loss of information in the limit; indeed the outer rescaling has the effect of averaging the sum of the $n$ terms, the inner one to balance intervals for which the eigenvalue grows as $n^2$ (this happens for instance when the partition is equidistribuited inside $I$).

The main result of the paper is then the following.

\begin{theorem}\label{theorem}
As $n\to\infty$ the functionals $\Fn\colon \prob\to  [0,+\infty]$ defined as
\begin{equation}\label{Fn}
\Fn(\mu)\!:=
\!\begin{cases}
\displaystyle \frac{1}{n}\!\sum_{1\leq j\leq n}\! \varphi\!\left(\!\frac{n}{\la(I_j^n)^{1/2}}\!\right) &\text{if $\mu=\mu_{\{I_j^n\}}$ as in \eqref{meas}, for $\{I_j^n\}\in\mathcal C_n$,}\\
\quad\; +\infty \quad &\text{otherwise},
\end{cases}
\end{equation}
$\Gamma$-converge, with respect to the weak* convergence in the space of probability measures $\mathcal P(\overline I)$, to the functional $F_\infty\colon\prob\to[0,+\infty]$ defined as
\begin{equation}\label{Fi}
\Fi(\mu):=\int_I \varphi\left(\frac{s(x)}{\pi f(x)}\right) f(x)\, dx+\varphi^\infty\mathcal L(\{f=0\})+ \varphi(0)\mu^s(\overline I),
\end{equation}
where $\mu=f\mathcal L+\mu^s$ is the decomposition of $\mu$ with respect to the Lebesgue measure ($f\mathcal L$ with $f\in L^1(I)$ is the absolutely continuous part of $\mu$ and $\mu^s$ its singular part) and the function $s(x)$ defined by \eqref{s}.
\end{theorem}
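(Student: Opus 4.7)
My plan is to prove the two $\Gamma$-convergence inequalities separately, both resting on sharp asymptotics for the principal eigenvalue on small subintervals. Testing the Rayleigh quotient in \eqref{eigenvalue} against $\sin(\pi(x-a)/(b-a))$, and conversely applying Poincar\'e's inequality, yields the crude two-sided bound
\[
\pi/\beta \;\le\; (b-a)\sqrt{\la((a,b))}\;\le\;\pi\beta+\beta(b-a),\qquad (a,b)\subset I.
\]
The finer asymptotic I will need is
\[
\sqrt{\la((a,b))}\int_a^b s(t)\,dt\;\longrightarrow\;\pi\qquad\text{as }b-a\to 0,
\]
uniformly in $a$. This can be derived via a Liouville-type transformation $\xi=\int_a^x s(t)\,dt$ (for smooth $p,w$) and then extended to $L^\infty$ coefficients by approximation, using the stability of the principal eigenvalue under $L^\infty$-perturbations of the data.

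For the $\Gamma$-liminf, I fix $\mu_n=\mu_{\{I_j^n\}}\weak \mu=f\mathcal L+\mu^s$ with $\liminf\Fn(\mu_n)<+\infty$. Setting $t_n(x):=n/\sqrt{\la(I_j^n)}$ on $I_j^n$ and writing $\mu_n=f_n\mathcal L+\mu_n^s$ with $f_n=1/(n|I_j^n|)$ on the non-degenerate intervals and $\mu_n^s$ collecting the Dirac deltas from the degenerate ones, the functional can be rewritten as
\[
\Fn(\mu_n)\;=\;\int_I \varphi(t_n(x))\,f_n(x)\,dx\;+\;\varphi(0)\,\mu_n^s(\overline I).
\]
The asymptotic lemma gives $t_n(x)\,f_n(x)\to s(x)/\pi$ in an averaged sense, and classifying the partition intervals according to whether $n|I_j^n|$ remains bounded, vanishes or diverges yields, respectively, the absolutely continuous integral, the $\varphi^\infty\mathcal L(\{f=0\})$ term and the $\varphi(0)\mu^s(\overline I)$ term of $\Fi$. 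Lower semicontinuity is then a consequence of the convexity of $\varphi$ together with the standard lower semicontinuity of convex integral functionals on measures (in the spirit of Reshetnyak and Bouchitt\'e--Buttazzo).

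For the $\Gamma$-limsup, a density argument reduces the construction of a recovery sequence to $\mu=g\mathcal L+\sum_k c_k\delta_{y_k}$ with $g$ continuous and bounded away from zero on $\{g>0\}$; the nontrivial case $\Fi(\mu)<+\infty$ in fact forces $\mathcal L(\{g=0\})=0$ in view of the constraint $\varphi^\infty\in\{0,+\infty\}$. For such $\mu$ I take the partition with endpoints $x_j^n:=F_\mu^{-1}(j/n)$, where $F_\mu$ is the distribution function of $\mu$; the associated $\mu_{\{I_j^n\}}$ converges weakly to $\mu$ by construction. On $\{g>0\}$ one has $|I_j^n|\sim 1/(n\,g(x_j^n))$ and the asymptotic gives $n/\sqrt{\la(I_j^n)}\to s(x_j^n)/(\pi g(x_j^n))$, so summation produces a Riemann sum converging to $\int_I \varphi(s/(\pi g))\,g\,dx$, while the intervals collapsing onto the atoms $y_k$ account for the $\varphi(0)\mu^s(\overline I)$ contribution.

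The step I expect to require the most care is bridging the pointwise-in-$x$ eigenvalue asymptotic with the integral form of $\Fi$ when $s$ is only $L^\infty$: the limit $(b-a)\sqrt{\la}\to\pi/s(x)$ is pointwise meaningful only at Lebesgue points of $s$, so summing over $n$ partition intervals requires a diagonalisation. A natural device is to approximate $s$ by a continuous $s_\varepsilon$ (for which the asymptotic is uniform), execute the estimates with $s_\varepsilon$ in place of $s$, and control the error via the $L^1$-smallness of $s-s_\varepsilon$ together with the uniform pinching $1/\beta\le s\le\beta$ when summing the contributions of the $n$ intervals.
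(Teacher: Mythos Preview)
Two concrete gaps. First, in the $\Gamma$-limsup your assertion that ``$\Fi(\mu)<+\infty$ forces $\mathcal L(\{g=0\})=0$ in view of $\varphi^\infty\in\{0,+\infty\}$'' is false when $\varphi^\infty=0$: in that case the term $\varphi^\infty\mathcal L(\{f=0\})$ in \eqref{Fi} simply vanishes, and measures with $\mathcal L(\{f=0\})>0$ can have finite energy. Your density reduction therefore does not cover this regime, and your quantile construction $x_j^n=F_\mu^{-1}(j/n)$ leaves each component of $\{g=0\}$ as a single non-shrinking interval whose contribution $\tfrac{1}{n}\varphi(n/\sqrt{\la})$ you have not analysed (it does tend to $0$ when $\varphi^\infty=0$, but this must be argued). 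The paper's recovery sequence, built first on piecewise constant measures (Definition~\ref{def.step}) and then extended by density in energy, handles the vanishing-density intervals explicitly by assigning them exactly one non-shrinking subinterval. Second, in your $\Gamma$-liminf the trichotomy is reversed: $n|I_j^n|\to 0$ gives $t_n\asymp n|I_j^n|\to 0$, hence the $\varphi(0)\mu^s(\overline I)$ contribution, while $n|I_j^n|\to\infty$ gives $f_n=1/(n|I_j^n|)\to 0$ and $t_n\to\infty$, producing the $\varphi^\infty\mathcal L(\{f=0\})$ term.

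On strategy, the paper never claims the \emph{uniform} asymptotic $\sqrt{\la((a,b))}\int_a^b s\to\pi$; for merely bounded measurable $p,w$ this is delicate, and your Liouville transformation requires smoothness that an $L^\infty$ approximation does not transfer cleanly (stability of $\la$ under $L^\infty$ perturbations of the data controls the eigenvalue, not the rate in the asymptotic). Instead the paper proves only the pointwise version at Lebesgue points of $p$ and $w$ (Lemma~\ref{lem.local}), via two changes of variables rather than a Liouville transform, and runs the liminf by a direct Radon--Nikodym argument: it introduces energy measures $\nu_n$ with $\nu_n(\overline I)=F_n(\mu_n)$, passes to a weak* limit $\nu$, and bounds the density $d\nu/d\mu_\epsilon$ from below via Jensen's inequality on shrinking intervals $J$, distinguishing the cases $f(x_0)>0$, $f(x_0)=0$, and $x_0$ in the support of $\mu^s$. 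Your appeal to Reshetnyak--Bouchitt\'e--Buttazzo is plausible in spirit, but to make it precise you would have to recast the functional as $\int \Phi(d\sigma_n/d\mu_n)\,d\mu_n$ for an auxiliary sequence $\sigma_n\weak (s/\pi)\mathcal L$, and the identification of that limit is exactly the content of Lemma~\ref{lem.linear}, where most of the paper's effort lies.
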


With an abuse of notation, the $\Gamma$-limit \eqref{Fi} can be shortly written as
\[
 \Fi(\mu)=\int_{\overline I} \varphi\left(\frac{s(x)}{\pi f(x)} \right)d\mu(x),
\]
where the integration w.r.t the measure $\mu$ has indeed to be intended as \eqref{Fi}.
Note that when $\varphi(0)=+\infty$ the $\Gamma$-limit is  finite only for absolutely continuous measures whereas when $\varphi$ is superlinear, i.e., $\varphi^\infty=+\infty$ the $\Gamma$-limit is finite only for measures with positive densities over all $I$.

Now, by classical results of $\Gamma$-convergence theory (see \cite{dalmaso}) combined with Jensen inequality we obtain the following information on the asymptotic behavior of the minimizing sequences of \eqref{problem}.

\begin{corollary}\label{corollary}
As $n \to \infty$, if $\{I_j^n\}$ is a minimizing sequence of problem \eqref{problem}, then  the probability measures $\mu_{\{I_j^n\}}$ converge in the weak* topology on $\prob$ to the probability measure $\mu_\infty$, absolutely continuous with respect to the Lebesgue measure, with density 
given by
\begin{equation}\label{minimum}
f_\infty(x)=\frac{s(x)}{\int_I s(t)dt},
\end{equation}
with $s(x)$ the function defined in \eqref{s}. In particular, for every open set $A\subset I$,
\begin{equation}\label{portion}
\lim_{n\to\infty} \sum_{\begin{subarray}{c} 1\leq j\leq n\\ x_{j-1}<x_{j}\end{subarray}}\frac{\mathcal L(I_j^n\cap A)}{n\mathcal L(I_j^n)}+\frac{\sharp\left(\{j: \text{$x_j^n\in A$ and $x_{j-1}^n=x_{j}^n$}\}\right)}{n}=\int_A f_\infty(x)dx, 
\end{equation}
where $I_j^n=(x_{j-1}^n,x_{j}^n)$ and
\[
 \lim_{n\to\infty} \frac{1}{n}\sum_{1\leq j\leq n}\varphi\left(\frac{n}{\la(I_j^n)^{1/2}}\right)=\varphi\left(\frac{1}{\pi}\int_Is(x)dx\right).
\]
\end{corollary}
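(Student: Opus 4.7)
The plan is to combine the abstract consequences of $\Gamma$-convergence (convergence of minimizers and of minimum values) with a Jensen-type argument that singles out the unique minimizer of the limit functional $\Fi$.

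First, since $\overline I$ is compact, the space $\prob$ is sequentially compact for the weak* topology, so the family $\Fn$ is automatically equicoercive. By Theorem~\ref{theorem} and the standard results of $\Gamma$-convergence (see \cite{dalmaso}), any minimizing sequence $\{I_j^n\}$ of \eqref{problem} produces probability measures $\mu_{\{I_j^n\}}$ whose weak* limit points all minimize $\Fi$, and moreover $\min \Fn\to\min\Fi$. Hence, once we show that $\Fi$ admits a unique minimizer, the entire sequence $\mu_{\{I_j^n\}}$ must converge weak* to it.

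To identify the minimizer of $\Fi$, I would decompose $\mu=f\mathcal L+\mu^s$ and set $m:=\int_I f\,dx\in[0,1]$, so that $\mu^s(\overline I)=1-m$. Applying Jensen's inequality to the strictly convex $\varphi$ with the probability density $f/m$ on $\{f>0\}$ (when $m>0$) yields
\[
\int_I \varphi\!\left(\frac{s}{\pi f}\right) f\,dx \;\geq\; m\,\varphi\!\left(\frac{C}{m}\right),\qquad C:=\frac{1}{\pi}\int_I s\,dx,
\]
because $\int (s/(\pi f))(f/m)\,dx=C/m$. Discarding the non-negative term $\varphi^\infty\mathcal L(\{f=0\})$ and then using convexity in the form $\varphi(C)\leq m\varphi(C/m)+(1-m)\varphi(0)$, I obtain $\Fi(\mu)\geq\varphi(C)$. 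Since $C>0$ and $\varphi$ is strictly convex, both inequalities are strict unless $m=1$ (hence $\mu^s=0$ and, in the case $\varphi^\infty=+\infty$, $\mathcal L(\{f=0\})=0$) and $s(x)/(\pi f(x))\equiv C$ almost everywhere on $I$, i.e. $f=f_\infty$ as in \eqref{minimum}.

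It then follows that the whole sequence $\mu_{\{I_j^n\}}$ converges weak* to $\mu_\infty=f_\infty\mathcal L$. To extract \eqref{portion}, I would observe that its left-hand side is exactly $\mu_{\{I_j^n\}}(A)$ for the open set $A\subset I$; since $\mu_\infty$ is absolutely continuous, $\mu_\infty(\partial A)=0$, and the Portmanteau characterization of weak* convergence delivers $\mu_{\{I_j^n\}}(A)\to\mu_\infty(A)=\int_A f_\infty\,dx$. The limit of minima is then simply $\min\Fn\to\min\Fi=\Fi(\mu_\infty)=\varphi(C)$. The delicate step I foresee is making the Jensen bound absorb simultaneously a possibly non-trivial singular part $\mu^s$ and a possibly large zero set $\{f=0\}$, uniformly across the two regimes $\varphi^\infty=0$ and $\varphi^\infty=+\infty$; the two-step chain above, buttressed by strict convexity of $\varphi$, handles both cases at once.
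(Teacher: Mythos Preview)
Your argument is essentially correct and follows the same overall strategy as the paper: equicoercivity from compactness of $\prob$, then $\Gamma$-convergence to reduce matters to identifying the unique minimizer of $\Fi$, and finally Jensen's inequality combined with strict convexity for uniqueness. Your two-step chain (Jensen, then the secant inequality $\varphi(C)\le m\,\varphi(C/m)+(1-m)\varphi(0)$) is in fact more economical than the paper's route, which parametrizes the equality case of Jensen as $f=c\,s\,\chi_E$ and then performs a four-case analysis over the possible values of $\varphi(0)$ and $\varphi^\infty$ to optimize in $E$ and $c$.

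One point does need care. Your Jensen step actually yields $m\,\varphi(C'/m)$ with
\[
C':=\frac{1}{\pi}\int_{\{f>0\}}s(x)\,dx,
\]
not $C=\frac{1}{\pi}\int_I s\,dx$, whenever $\{f=0\}$ has positive Lebesgue measure; your identity ``$\int (s/(\pi f))(f/m)\,dx=C/m$'' is literally false in that situation. The repair is easy but should be made explicit: if $\varphi^\infty=+\infty$ then $\Fi(\mu)=+\infty$ whenever $\mathcal L(\{f=0\})>0$, so there is nothing to prove; if $\varphi^\infty=0$ then $\varphi$ is strictly decreasing (strict convexity plus $\varphi^\infty=0$), hence $\varphi(C'/m)\ge\varphi(C/m)$ with strict inequality unless $C'=C$, i.e.\ unless $\mathcal L(\{f=0\})=0$. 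This last strictness is precisely what rules out a minimizer with $\mathcal L(\{f=0\})>0$ in the sublinear regime, so it cannot be swept under the rug. With this patch in place your proof goes through and is arguably cleaner than the paper's case-by-case treatment.
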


By \eqref{portion} we deduce that the number of intervals of an optimal partition that intersect $A$, rescaled by $n$, converges to the density $f_\infty$.
This means that in order to reinforce a non-homogeneous string it is asymptotically convenient to concentrate the partition in those regions of the string at higher density $w$ and lower Young modulus $p$, with a density $f_\infty$  given by \eqref{minimum}, namely proportional to $\sqrt{w/p}$ (when the string is homogeneous, as expected, one finds in the limit the uniform distribution). Observe that the limiting measure $f_\infty dx$ is absolutely continuous with respect to the Lebesgue measure and that it neither depends on the potential $q$ nor on the convex function $\varphi$, reflecting the fact that in all cases minimizing sequences exhibit the same asymptotic behavior as $n\to\infty$.  However, the convex function $\varphi$ appears in limit of the cost functionals.

Some comments on the assumptions on $\varphi$ are in order. For Theorem~\ref{theorem} strict convexity can be relaxed to convexity. The strict convexity is just needed for the uniqueness of the minimizer of \eqref{Fi}, that is used in Corollary~\ref{corollary} (otherwise a similar statement holds up to subsequences). Moreover, the positivity of $\varphi$ can  easily be replaced by a lower bound on $\varphi$, that is by requiring the minimum of $\varphi$ on $[0,+\infty)$ to be possibly negative but finite. Moreover, the case where $\varphi$ has linear growth, i.e., $0<\varphi^\infty<+\infty$, deserves deeper inspections (Lemma~\ref{lem.linear} is a first attempt in this direction). Indeed when the function $\varphi$ is linear the analysis becomes more complicated since the $\Gamma$-limit should also depend on some \emph{convexity} of the coefficients $p$ and $w$ and on the potential $q$. At least when $p$ and $w$ are sufficiently regular ($\mathcal C^1(\mathbb R)$ would be enough) and $\{f=0\}$ is an open interval we conjecture the following dichotomy to hold. The second term in the $\Gamma$-limit \eqref{Fi} should be replaced with:
\begin{itemize}
\item[-] ${\varphi^\infty}/{\la(\{f=0\})^{1/2}}$ when $w h''+qh\geq 0$ over $\{f=0\}$;
\item[-] $({\varphi^\infty}/{\pi})\int_{\{f=0\}} s(x)dx$ when $w h''+qh< 0$ over $\{f=0\}$;
\end{itemize}
where $h=\sqrt[4]{pw}$. Notice that, in the case of constant coefficients and $q=0$ these two terms coincide (see also the comment at the end of Example 1. in Section~\ref{sec.4}). 
In general, when the coefficients are only measurable and $f$ vanishes on a generic measurable set, 
it is not clear what the explicit representation of this term should be. It is worth noting that this dichotomy is interestingly connected to the following inequality {\`a la} Brascamp-Lieb \cite{bralie}:
\[
\frac{1}{\la((0,x))^{1/2}}+\frac{1}{\la((x,1))^{1/2}}\geq \frac{1}{\la(I)^{1/2}}\quad  \text{for every $x\in I$}.
\]
We therefore want to throw down a gauntlet and ask what are sharp conditions on the coefficients $p,q,w$ for the validity of this estimate (in the case of constant coefficients $p,q,w$ it is always true when $q\geq 0$, but becomes false as soon as $q$ is allowed to be negative).

We finally notice that Theorem~\ref{theorem} can easily be extended to a finite union of intervals. For the case of an unbounded interval (for some particular choices of data $p,q,w$) some extra difficulties emerge but the problem can be of interest as well (this is somehow related to the case of unbounded non-negative data, which corresponds to letting $\beta=+\infty$).
\smallskip

The plan of the paper is the following. In Section~\ref{sec.2} we prove some preliminary results on the eigenvalues of Sturm-Liouville operators. In Section~\ref{sec.3} we prove $\Gamma$-convergence results, in particular Theorem~\ref{theorem} and Corollary~\ref{corollary}. In section~\ref{sec.4} we show several concrete examples for which Theorem~\ref{theorem} and Corollary~\ref{corollary} apply.

\subsection*{Notation}
Here and henceforth $\sharp$ and $\mathcal L$ denote respectively the counting measure and the Lebesgue measure. Integration with respect to the Lebeasgue measure will be simply denoted by $dx$. The usual abbreviations w.r.t. and $\mu$-a.e. stand for ``with respect to'' and ``almost everywhere with respect to the measure $\mu$''. If no measure is specified it has to be intended w.r.t. the Lebesgue measure. We adopt the following conventions $c(+\infty)=+\infty$ and $c/(+\infty)=0$ for all constants $c>0$.
We denote by $\intmed_A f :=\frac{1}{\mathcal L(A)}\int_A f(x) dx$ the mean value of a function $f\in L^1(A)$ on some open set $A\subset I$.  When $J\subset \mathbb R$ is a non-empty open interval and $x\in \mathbb R$ with $\lim_{J\downarrow x}$ we mean the limit as $J$ shrink toward $x$. With some abuse of notation if $\eta\in \prob$ then by $\eta (J)$ we always mean $\eta(J\cap \overline I)$.

\section{Preliminaries on the eigenvalues of Sturm-Liouville problems}\label{sec.2}

We introduce some preliminary results that will be used for the proof of the main theorem. 
In this section $J \subset \mathbb R$ will denote a non-empty bounded open interval. Then when $p$, $q$, and $w$ are constants the eigenvalue defined in \eqref{eigenvalue} with $A=J$ admits the following explicit expression:
\begin{equation}\label{explicit}
\la(J)=\frac{\pi^2}{s^2\mathcal L(J)^2}+q,
\end{equation}
where we recall that, by definition \eqref{s}, $s^2=w/p$. We point out that the Sturm-Liouville problem reduces to the pure Laplace problem when the coefficients are $p=w=1$ (thus $s=1$) and $q=0$.
In general, when the coefficients are not necessarily constants, the eigenvalue \eqref{eigenvalue} cannot be explicitly computed in terms of geometrical quantities. However the eigenvalue of an interval is related to the reciprocal of its length squared. More precisely, the assumptions on the coefficients $p$, $q$ and $w$ provide the following global bounds:
\begin{equation}\label{global}
\frac{1}{\beta^2} \frac{\pi^2}{\mathcal L(J)^2}\leq {\la(J)} \leq \beta^2\frac{\pi^2}{\mathcal L(J)^2}+\beta^2.
\end{equation}
Locally we can be more precise: we prove that its local behavior is the same as when the coefficients are constant.

\begin{lemma}\label{lem.local}
Let $x_0$ be a Lebesgue point for $p$ and $w$. Then
\[
\lim_{J\downarrow x_0} {\la(J)\mathcal L(J)^2}=\frac{\pi^2}{s(x_0)^2}.
\]
\end{lemma}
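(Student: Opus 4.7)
The plan is to sandwich $\la(J)\mathcal L(J)^2$ between matching $\limsup$ and $\liminf$ bounds, both equal to $\pi^2/s(x_0)^2$.

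For the upper bound, I would test the Rayleigh quotient \eqref{eigenvalue} against the constant-coefficient first eigenfunction: with $J=(a,b)$ and $L:=\mathcal L(J)$, take $u(x)=\sin(\pi(x-a)/L)$. After multiplying the resulting inequality by $L^2$ and dividing numerator and denominator by $L$, each integral has the form $\frac{1}{L}\int_J h(x)\,\psi(\pi(x-a)/L)\,dx$ with $h\in\{p,q,w\}$ and $\psi\in\{\sin^2,\cos^2\}$. A change of variable $y=(x-a)/L$, the Lebesgue point property $\frac{1}{L}\int_J|h-h(x_0)|\,dx\to 0$, and $\int_0^1\sin^2(\pi y)\,dy=\int_0^1\cos^2(\pi y)\,dy=1/2$ let me pass to the limit and obtain $\limsup L^2\la(J)\le \pi^2 p(x_0)/w(x_0)=\pi^2/s(x_0)^2$. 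The $q$-contribution is $O(L^2)$ by $q\le\beta$, hence negligible.

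For the matching lower bound, let $u_J$ be a first eigenfunction on $J$ and define its rescaling $V_L(y):=u_J(a+Ly)\in H^1_0((0,1))$, normalized by $\|V_L\|_{L^2(0,1)}=1$. Setting $\mu_L:=L^2\la(J)$, which is uniformly bounded in $L$ by \eqref{global}, the change of variables in the Rayleigh quotient gives
\[
\mu_L=\frac{\int_0^1 p(a+Ly)V_L'(y)^2\,dy+L^2\int_0^1 q(a+Ly)V_L^2\,dy}{\int_0^1 w(a+Ly)V_L^2\,dy}.
\]
If one could replace each coefficient by its value at $x_0$ with an $o(1)$ error, Poincar\'e's inequality $\int_0^1 V_L'^2\ge\pi^2\int_0^1 V_L^2=\pi^2$ on $H^1_0((0,1))$ would immediately yield $\liminf \mu_L\ge\pi^2 p(x_0)/w(x_0)$, closing the squeeze.

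The main obstacle is carrying out this replacement in the numerator, because $V_L'$ only converges weakly in $L^2$, while the Lebesgue point property gives merely $L^1$ convergence of $p(a+L\,\cdot\,)$ to $p(x_0)$. I would bypass this by establishing a uniform $W^{1,\infty}((0,1))$ bound on $V_L$. Integrating the rescaled ODE $-(p(a+Ly)V_L')'+L^2 q(a+Ly)V_L=\mu_L w(a+Ly)V_L$ once yields
\[
p(a+Ly)V_L'(y)=p(a)V_L'(0)+\int_0^y [L^2 q-\mu_L w](a+Lt)V_L(t)\,dt,
\]
whose integral term is bounded uniformly (since $\|V_L\|_\infty\le C\|V_L\|_{H^1}\le C'$ by Sobolev embedding and the Rayleigh bound on $\|V_L'\|_{L^2}$). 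The constant $p(a)V_L'(0)$ is then determined by further integrating $V_L'$ over $(0,1)$ and imposing $V_L(0)=V_L(1)=0$: the resulting scalar relation, together with $p\ge 1/\beta$, gives $|p(a)V_L'(0)|\le C$, hence $\|V_L'\|_{L^\infty}\le C$ uniformly. With this in hand, $\bigl|\int_0^1 (p(a+Ly)-p(x_0))V_L'^2\,dy\bigr|\le\|V_L'\|_\infty^2\cdot\frac{1}{L}\int_J|p-p(x_0)|\,dx\to 0$, and similarly for $w$; the $q$-term is trivially $O(L^2)$. This reduces $\mu_L$ to $(p(x_0)\int_0^1 V_L'^2+o(1))/(w(x_0)+o(1))$, and Poincar\'e closes the argument.
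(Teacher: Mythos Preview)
Your argument is correct, but it follows a different route from the paper's.

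For the upper bound, the paper first performs the change of variable $\phi_1(x)=a+(\intmed_J w)^{-1}\int_a^x w$, which flattens the weight $w$ in the Rayleigh quotient, and only then tests with the sine eigenfunction; this produces the explicit estimate \eqref{upper} involving $\intmed_J(pw^2)$ and an oscillation term. Your direct test with $\sin(\pi(x-a)/L)$ in the original coordinates is simpler and gets to the same limit, since the Lebesgue-point property handles $\int_0^1 p(a+Ly)\cos^2(\pi y)\,dy\to p(x_0)/2$ (and similarly for $w$) without any preliminary transformation.

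For the lower bound the difference is more substantial. The paper uses a second change of variable $\phi_2(x)=a+(\intmed_J 1/p)^{-1}\int_a^x 1/p$ to turn the principal part into the pure Laplacian, and then only needs the $L^\infty$ bound $\sup_J u_J^2\le \mathcal L(J)\int_J (u_J')^2$ on the \emph{eigenfunction} (not its derivative) to control the oscillation of the new weight $p_2w_2$. You instead stay in rescaled coordinates and obtain a uniform $L^\infty$ bound on $V_L'$ by integrating the ODE once and solving for the boundary flux $(P V_L')(0)$ via the zero Dirichlet conditions. This is a genuine 1D elliptic-regularity step: it exploits that $p u_J'$ is absolutely continuous (since $(p u_J')'\in L^\infty$), which would have no analogue in higher dimensions. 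With $\|V_L'\|_\infty$ bounded, the $L^1$ Lebesgue-point convergence of $p(a+L\,\cdot\,)$ suffices, and Poincar\'e closes the sandwich.

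In short, the paper trades regularity for geometry (two Liouville-type transforms), while you trade geometry for regularity (an ODE-integration bound on $V_L'$). Both are valid; yours is arguably more elementary for the upper bound, while the paper's lower bound avoids appealing to pointwise values of $p u_J'$ and stays at the level of integral estimates.
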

\begin{proof}
We first prove upper and lower bounds by means of classical change of variables (see, e.g., \cite[p. 292]{couhil}). For the lower bound, if $J=(a,b)$ the function $\phi_1: J\to J$, defined for every $x\in J$ as
\[
\phi_1(x):=\frac{1}{\intmed_Jw}\int_a^x w(t)\,dt+a,
\]
allows to change variable $y=\phi_1(x)$ in \eqref{eigenvalue} and obtain
\[
\lambda(J)=\min_{\begin{subarray}{c}u\in H^1_0(J)\\ u\neq 0\end{subarray}} \frac{(\intmed_J w)^{-2}\int_J p_1(y) w_1(y)u'(y)^2 dy+{\int_J (q_1(y)/w_1(y))u(y)^2 dy}}{\int_J u(y)^2 dy},
\]
where we set
\[
p_1(y):=p(\phi_1^{-1}(y)), \quad q_1(y):=q(\phi_1^{-1}(y)),\quad w_1(y):=w(\phi_1^{-1}(y)).
\]
Then, by testing the minimum with the first eigenfunction $u_1$ of the Laplacian in $J$, normalized as $\int_J u_1(y)^2dy=1$ (e.g., when $J=(0,b)$ take $v(x)=u_1(x):=\sqrt{2/b}\sin(\pi x/b)$), together with the trivial estimates $p_1(y) w_1(y)\leq\intmed_J(p_1w_1)+|p_1(y) w_1(y)-\intmed_J(p_1w_1)|$ and $q_1(y)/ w_1(y)\leq \beta^2$ for a.e. $y\in J$ allows to obtain
\[
{\la(J)}\leq \frac{1}{(\intmed_J w)^2}\Big(\intmed_J(p_1w_1)\int_J u_1'(y)^2dy+\int_{J} \big|p_1(y)w_1(y)-\intmed_J(p_1w_1)\big|u_1'(y)^2dy\Big)+\beta^2.
\]
By \eqref{explicit} we have $\int_J u_1'(y)^2dy= \pi^2/\mathcal L(J)^2$, and since $\sup_{J} (u_1')^2={2\pi^2}/{\mathcal L(J)^3}$ we find
\[
\la(J)\leq \frac{\pi^2}{\mathcal L(J)^2} \frac{1}{(\intmed_J w)^2}\bigg(\intmed_J(p_1w_1)+2\intmed_J\Big|p_1w_1-\intmed_J(p_1w_1)\Big|\bigg)+\beta^2.
\]
Exploiting the change of variables given by $\phi_1$ one obtains the following
\begin{equation}\label{upper}
\la(J)\leq \frac{\pi^2}{\mathcal L(J)^2}\frac{1}{(\intmed_J w)^{3}}\bigg(\intmed_J(pw^2)+2\intmed_J\Big|pw^2-\frac{\intmed_J(pw^2)}{\intmed_J w}w\Big|\bigg)+\beta^2.
\end{equation}

For the lower bound, we use instead the function $\phi_2: J\to J$, defined for every $x\in J$ as
\[
\phi_2(x):=\frac{1}{\intmed_J 1/p}\int_a^x 1/p(t)\,dt+a,
\]
to change variables $z=\phi_2(x)$ in \eqref{eigenvalue} and obtain
\[
\lambda(J)=\min_{\begin{subarray}{c}u\in H^1_0(J)\\ u\neq 0\end{subarray}} \frac{(\intmed_J 1/p)^{-2}\int_J u'(z)^2 dz+{\int_J p_2(z)q_2(z)u(z)^2 dz}}{\int_J p_2(z)w_2(z)u(z)^2 dz},
\]
where here we set
\[
p_2(z):=p(\phi_2^{-1}(z)), \quad q_2(z):=q(\phi_2^{-1}(z)),\quad w_2(z):=w(\phi_2^{-1}(z)).
\]
Now, if $u_J$ is the minimizer of the above quotient with $\int_J (u_J)^2dz=1$, the trivial estimates  $p_2(z)q_2(z)\geq0$ and $p_2(z) w_2(z)\leq\intmed_J(p_2w_2)+|p_2(z) w_2(z)-\intmed_J(p_2w_2)|$ for a.e. $z\in J$ yield
\[
\la(J)\geq  \frac{\int_{J} u_J'(z)^2dz}{(\intmed_J 1/p)^{-2}\big(\intmed_J(p_2w_2)+\int_{J} |p_2(z)w_2(z)-\intmed_J(p_2w_2)|u_J(z)^2dz\big)}.
\]
By using first at denominator the one-dimensional Poincar\'e inequality $\sup_J (u_J)^2\leq \mathcal L(J)\int_J (u_J')^2dz$ and then for the numerator $\int_J (u_J')^2dz\geq \pi^2/\mathcal L(J)^2 \int_J (u_J)^2dz$ (recall the eigenvalue of the Laplace problem \eqref{explicit} in the case $p=w=1$ and $q=0$) we obtain
\begin{equation*}
\la(J)\geq \frac{\pi^2}{(\intmed_J 1/p)^{2}\big(\intmed_J(p_2w_2)+\pi^2\intmed_{J} |p_2w_2-\intmed_J(p_2w_2)|\big)\mathcal L(J)^2},
\end{equation*}
and now, exploiting the change of variables given by $\phi_2$ we obtain
\begin{equation}\label{lower}
\la(J)\geq \frac{\pi^2}{(\intmed_J 1/p)\big(\intmed_J(w)+\pi^2\intmed_{J} \big|w-\frac{\intmed_J(w)}{\intmed_J 1/p}\frac{1}{p}\big|\big)\mathcal L(J)^2}.
\end{equation}
The result stated in the lemma then follows by combining \eqref{upper} with \eqref{lower}, thanks to Radon-Nikodym theorem (see \cite{rudin}).
\end{proof}

We now prove an asymptotic result for the sum of reciprocal eigenvalues. 
For this we need the following definition.

\begin{definition}[Accumulation point]\label{def.acc}
Let $\{I_j^n\}\in \mathcal C_n$ for all $n\in\mathbb N$. We say that a point $x\in I$ is an \emph{accumulation point} of the partitions $\{I_j^n\}$ if for every $n\in\mathbb N$ there exists an index $j_n$ with $x\in \overline{I}_{j_n}^{\,n}$ and $I_{j_n}^n\in\{I_j^n\}$ such that
$$\lim_{n\to\infty} \mathcal L(I_{j_n}^n)=0.$$
We denote by $I^a$ the set made up of all accumulation points of the partitions $\{I_j^n\}$ and by $I^{b}=I\setminus I^a$ its complement.
\end{definition}

For instance, if $I_j^n=((j-1)/(2n-2), j/(2n-2))$ for all $j=1,\dots (n-1)$ while $I_n^n=(1/2,1)$ then $I^a=(0,1/2]$ and $I_b=(1/2,1)$.
In other words a point belongs to $I^b$ if there are no sequences of intervals shrinking toward it. This implies that $I^b$ is made up of union of open intervals and so it is an open set; then both $I^a$ and $I^b$ are measurable sets. 

\begin{lemma}\label{lem.linear}
Let $\{I_j^n\}\in \mathcal C_n$
and $J\subset \mathbb R$ be a non-empty bounded open interval. 
Then,  up to subsequences, 
\[
\lim_{n\to\infty} \sum_{1\leq j\leq n\atop x_{j-1}<x_{j}} \frac{\mathcal L(I_j^n\cap J)}{\lambda(I_j^n)^{1/2}\mathcal L(I_j^n)}= \frac{1}{\pi}\int_{I^a\cap J} s(x)\, dx+c_J \mathcal L(I^b\cap J),
\]
where $c_J$ is a constant possibly depending on $J$ such that $1/(\beta\sqrt{\pi^2+1})\leq c_J\leq \beta/\pi$.
\end{lemma}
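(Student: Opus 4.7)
The plan is to rewrite the sum as an integral of a uniformly bounded density function and then treat separately the contributions coming from $I^a$ and $I^b$. For a.e. $x\in I$, define
\[
d_n(x):=\sum_{\substack{1\leq j\leq n\\ x_{j-1}^n<x_{j}^n}} \frac{\chi_{I_j^n}(x)}{\lambda(I_j^n)^{1/2}\,\mathcal L(I_j^n)},
\]
so that the sum on the left-hand side of the lemma coincides with $\int_{I\cap J}d_n(x)\,dx$. From the global bound \eqref{global}, together with $\mathcal L(I_j^n)\leq 1$, one reads off that
\[
\frac{1}{\beta\sqrt{\pi^2+1}}\leq d_n(x)\leq \frac{\beta}{\pi}\qquad\text{for a.e. } x\in I.
\]

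First I would establish pointwise convergence of $d_n$ on $I^a$. The collection of all partition endpoints $\bigcup_n\{x_0^n,\ldots,x_n^n\}$, together with the non-Lebesgue points of $p$ and $w$, is negligible; hence for a.e. $x\in I^a$ there is a \emph{unique} index $j(x,n)$ placing $x$ in the interior of $I_{j(x,n)}^n$ for every $n$. Since $I_{j_n}^n$ in Definition~\ref{def.acc} must be nonempty (as its closure contains $x$), the requirement $x\in\overline{I_{j_n}^n}$ forces $j_n=j(x,n)$, so $\mathcal L(I_{j(x,n)}^n)\to 0$. Lemma~\ref{lem.local} then yields $\lambda(I_{j(x,n)}^n)^{1/2}\mathcal L(I_{j(x,n)}^n)\to \pi/s(x)$, whence $d_n(x)\to s(x)/\pi$ a.e. on $I^a$. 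Dominated convergence (via the $L^\infty$ bound above) now gives
\[
\int_{I^a\cap J}d_n(x)\,dx \;\longrightarrow\; \frac{1}{\pi}\int_{I^a\cap J}s(x)\,dx.
\]

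For the remaining portion of the integral, the bounded numerical sequence $\int_{I^b\cap J}d_n(x)\,dx$ is confined to $[\mathcal L(I^b\cap J)/(\beta\sqrt{\pi^2+1}),\,\beta\mathcal L(I^b\cap J)/\pi]$, so up to extraction of a subsequence it converges to some value which can be written as $c_J\,\mathcal L(I^b\cap J)$ with $c_J$ in the range stated in the lemma (the choice of $c_J$ being immaterial when $\mathcal L(I^b\cap J)=0$). Summing this with the limit on $I^a\cap J$ produces the asserted identity along the extracted subsequence.

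The main obstacle is the pointwise step: showing that for a.e. $x\in I^a$ it is precisely the sequence of intervals $I_{j(x,n)}^n$ containing $x$ whose length shrinks to zero. Definition~\ref{def.acc} a priori only supplies \emph{some} index $j_n$ along which lengths vanish, but after discarding the measure-zero set of partition endpoints, the choice of $j_n$ is forced and must coincide with the one dictated by the interval actually containing $x$. Once this is in place, Lemma~\ref{lem.local} does the heavy lifting and the rest is dominated convergence plus a routine Bolzano--Weierstrass extraction for $I^b$.
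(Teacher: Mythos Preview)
Your proof is correct and follows essentially the same strategy as the paper: rewrite the sum as an integral of a bounded density, split according to $I^a$ and $I^b$, use Lemma~\ref{lem.local} plus dominated convergence on $I^a$, and extract a subsequence on $I^b$ via the global bounds~\eqref{global}. Your handling of the pointwise step is actually slightly cleaner than the paper's: by integrating $d_n$ directly over the fixed set $I^a\cap J$ you avoid the Lebesgue density theorem, which the paper invokes because it carries the factor $\mathcal L(I_j^n\cap I^a\cap J)/\mathcal L(I_j^n)$ inside the integrand; your observation that discarding the countable set of all partition endpoints forces $j_n=j(x,n)$ is exactly the justification the paper leaves somewhat implicit.
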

\begin{proof}
To compute the limit we split the sum w.r.t. those indeces inside and outside the set of accumulation points, i.e., we write
\begin{equation}\label{proof.linear}
\sum_{1\leq j\leq n\atop x_{j-1}<x_{j}} \frac{\mathcal L(I_j^n\cap J)}{\lambda(I_j^n)^{1/2}\mathcal L(I_j^n)}= \sum_{1\leq j\leq n\atop x_{j-1}<x_{j}} \frac{\mathcal L(I_j^n\cap I^a\cap J)}{\lambda(I_j^n)^{1/2}\mathcal L(I_j^n)}+ \sum_{1\leq j\leq n\atop x_{j-1}<x_{j}}\frac{\mathcal L(I_j^n\cap I^b\cap J)}{\lambda(I_j^n)^{1/2}\mathcal L(I_j^n)}.
\end{equation}
Since a.e. point in $\mathbb R$ is a Lebesgue point for $p$ and $w$, by Definition~\ref{def.acc} and Lemma~\ref{lem.local} with the Lebesgue density theorem
\[
\lim_{n\to\infty}\sum_{1\leq j\leq n\atop x_{j-1}<x_{j}}\frac{\mathcal L(I_j^n\cap I^a\cap J)}{{\la(I_j^n)^{1/2}}\mathcal L(I_j^n)^2}\,\chi_{I_j^n}(x)= \frac{s(x)}{\pi}\quad \text{a.e. $x\in I^a\cap J$},
\]
Notice that the set of points of $I_a$ for which the Lebesgue density theorem does not apply is at most countable 
(indeed every such bad point is surrounded by two open subintervals of $I_b$, and by definition $I_b$ is at most countable).
Moreover, by the lower bound in \eqref{global},
\[
\lim_{n\to\infty}\sum_{1\leq j\leq n}\frac{\mathcal L(I_j^n\cap I^a\cap J)}{{\la(I_j^n)^{1/2}}\mathcal L(I_j^n)^2}\,\chi_{I_j^n}(x)\leq \frac{\beta}{\pi}\quad \text{a.e. $x\in I^a\cap J$}.
\]
Then the Lebesgue dominated convergence theorem implies that the limit of the first term in the right-hand side of \eqref{proof.linear} exists and
\[
\lim_{n\to\infty}\sum_{1\leq j\leq n}\frac{\mathcal L(I_j^n\cap I^a\cap J)}{{\la(I_j^n)^{1/2}}\mathcal L(I_j^n)}= \frac{1}{\pi} \int_{I^a\cap J}s(x)\,dx.
\]

For the second term in the right-hand side of \eqref{proof.linear} we use \eqref{global} with $J=I_j^n$ and the trivial estimate $\mathcal L(I_j^n)\leq 1$ for all $j$ to bound it as follows: 
\[
\frac{1}{\beta\sqrt{\pi^2+1}}\mathcal L(I^b\cap J) \leq\sum_{1\leq j\leq n}\frac{\mathcal L(I_j^n\cap I^b\cap J)}{\lambda(I_j^n)^{1/2}\mathcal L(I_j^n)}\leq  \frac{\beta}{\pi} \mathcal L(I^b\cap J).
\]
Therefore, up to subsequences, also this term converges as $n\to+\infty$ and what claimed in the lemma holds.
\end{proof}

The previous lemma can be thought of as a first result for the asymptotics in the linear case where $\varphi(x)=\varphi^\infty x$ with $0<\varphi^\infty<+\infty$. In this case the difficulty relies on the computation of the second term, which depends on the coefficients $p,q$ and $w$.

We conclude these preliminaries with some local estimates for the set of all accumulation points (and consequently for its complement).

\begin{lemma}\label{lem.point}
Let $\{I_j^n\}\in \mathcal C_n$ such that the measures $\mu_{\{I_j^n\}}$ as defined in \eqref{meas} weak* converge to a measure $\mu\in \prob$. If $x_0\in I$ is a Lebesgue point for the Radon-Nikodym derivative $f\in L^1(I)$ of $\mu$ w.r.t. the Lebesgue measure with $f(x_0)>0$ then $$\lim_{J\downarrow x_0}\frac{\mathcal L(I^a\cap J)}{\mathcal L(J)}=1 \quad \text{and}\quad  \lim_{J\downarrow x_0}\frac{\mathcal L(I^b\cap J)}{\mathcal L(J)}= 0.$$
\end{lemma}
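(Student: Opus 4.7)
The plan is to show that almost every point where $f>0$ must lie in $I^a$; once this is established, the lemma follows from Lebesgue differentiation applied to $f$ at $x_0$.

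The first step is to prove that every Lebesgue point $x$ of $f$ with $f(x)>0$ belongs to $I^a$. Arguing by contradiction, suppose $x\in I^b$: by Definition~\ref{def.acc} there exist $\delta>0$ and a subsequence $n_k$ along which every partition interval whose closure contains $x$ has length at least $\delta$. Passing to a further subsequence one may assume one of two scenarios: either (B) $x$ is a common endpoint of two adjacent intervals of $\{I_j^{n_k}\}$, both of length $\geq\delta$; or (A) $x$ lies in the interior of a unique interval $I_{k}=(x-d_L^k,\,x+d_R^k)$, with $d_L^k\to d_L$ and $d_R^k\to d_R$ in $[0,1]$ and $d_L+d_R\geq\delta$. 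In case (B), or in the subcase of (A) where both $d_L,d_R>0$, a symmetric ball $B(x,r)$ of small radius $r$ is eventually covered by at most two partition intervals of length $\geq\delta$, so $\mu_{\{I_j^{n_k}\}}(B(x,r))\leq 2r/(n_k\delta)\to 0$; weak* lower semicontinuity on the open set $B(x,r)$ then forces $\mu(B(x,r))\leq 0$, contradicting $\mu(B(x,r))\geq\int_{B(x,r)}f\geq (f(x)-\varepsilon)\,2r>0$ for $\varepsilon$ and $r$ small. In the remaining off-center subcase, say $d_L=0$ and $d_R\geq\delta$, the one-sided interval $(x,x+r)$ lies inside $I_k$ for $r<d_R$ and large $k$; the same weak* argument gives $\mu((x,x+r))=0$, hence $f=0$ almost everywhere on $(x,x+r)$. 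But then
\[
\intmed_{B(x,r)}|f-f(x)|\;\geq\;\frac{1}{2r}\int_{(x,x+r)}f(x)\,dy\;=\;\frac{f(x)}{2}>0
\]
uniformly in small $r$, contradicting the Lebesgue point property at $x$.

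Once the claim is in hand, the fact that non-Lebesgue points of $f$ form a Lebesgue-null set implies $\mathcal L(I^b\cap\{f>0\})=0$, so $\mathcal L(I^b\cap J)\leq\mathcal L(\{f=0\}\cap J)$ for every interval $J$. At the Lebesgue point $x_0$ with $f(x_0)>0$, the elementary inequality $\int_J|f-f(x_0)|\geq f(x_0)\,\mathcal L(\{f=0\}\cap J)$ combined with $\intmed_J|f-f(x_0)|\to 0$ as $J\downarrow x_0$ yields $\mathcal L(\{f=0\}\cap J)/\mathcal L(J)\to 0$, and the complementary limit $\mathcal L(I^a\cap J)/\mathcal L(J)\to 1$ follows at once from $\mathcal L(J)=\mathcal L(I^a\cap J)+\mathcal L(I^b\cap J)$. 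The main technical obstacle is the off-center subcase of the first step: when $x$ lies arbitrarily close to one endpoint of a long partition interval while the neighbor on the other side may be arbitrarily short, symmetric balls cannot detect the contradiction, and one is forced to combine a one-sided ball estimate with the full Lebesgue point property of $f$ rather than just its positivity at $x$.
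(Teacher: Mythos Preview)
Your proof is correct and follows the same overall strategy as the paper: both arguments rest on the inclusion $\{f>0\}\subset I^a$ up to a Lebesgue-null set, and then conclude via Lebesgue differentiation at $x_0$. The paper simply asserts the key inclusion $\{x\in J:|f(x)-f(x_0)|\leq\epsilon\}\subset I^a\cap J$ ``up to negligible sets'' without justification and proceeds with a Chebyshev-type estimate, whereas you supply a complete contradiction argument for that inclusion using the weak* convergence of $\mu_{\{I_j^n\}}$; in this sense your proof is more detailed than the original.
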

\begin{proof}
Fix $0<\epsilon<f(x_0)$. Then for every $J$ sufficiently small around $x_0$ we have
$$\mathcal L(\{x\in J : |f(x)-f(x_0)|>\epsilon\})<\epsilon \mathcal L(J)$$
and then, passing to the complementary set,
$$\mathcal L(\{x\in J : |f(x)-f(x_0)|\leq \epsilon\})\geq (1-\epsilon) \mathcal L(J).$$ 
Since the inclusion $\{x\in J : |f(x)-f(x_0)|\leq\epsilon\}\subset I^a\cap J$ holds up to negligible sets then $(1-\epsilon)\leq \mathcal L(I^a\cap J)/\mathcal L(J)$. 
Letting $J\downarrow x_0$ we obtain the claim, thanks to the arbitrariness of $\epsilon$.
\end{proof}

\section{$\Gamma$-convergence}\label{sec.3}

We start by proving the $\Gamma$-liminf and the $\Gamma$-limsup inequalities.

\begin{proposition}[$\Gamma$-liminf inequality]\label{prop.liminf}
For every probability measure $\mu\in\prob$ and every sequence $\{\mu_n\}\subset\prob$ such that $\mu_n\weak\mu$ it holds 
\begin{equation}\label{liminf}
  \liminf_{n\to\infty} \Fn(\mu_n)\geq \Fi(\mu).
\end{equation}
\end{proposition}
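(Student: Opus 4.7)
The plan is to pass to a subsequence realizing $\liminf_n F_n(\mu_n)$, which we may assume finite (otherwise \eqref{liminf} is vacuous); along such a subsequence every $\mu_n$ must be of the form $\mu_{\{I_j^n\}}$ for some $\{I_j^n\}\in\mathcal{C}_n$, and the accumulation set $I^a$ of Definition~\ref{def.acc} is then intrinsically attached to the sequence. Denote by $f_n\equiv 1/(n\mathcal{L}(I_j^n))$ the absolutely continuous density of $\mu_n$ on each non-empty $I_j^n$, and by $g_n(x):=n/\lambda(I_j^n)^{1/2}$ the corresponding piecewise-constant function on $I$. Since $\int_{I_j^n} f_n\,dx=1/n$ on every non-empty piece and $\lambda(\emptyset)=+\infty$ yields $g_n=0$ on empty intervals, a direct check rewrites
$$
F_n(\mu_n) \;=\; \int_I \varphi(g_n)\, f_n\, dx \;+\; \varphi(0)\,\mu_n^s(\overline I).
$$
This identity pairs $\varphi(g_n)$ against the natural density of $\mu_n$ and isolates the Dirac-delta contributions from empty intervals, and will drive the argument.

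Next I would identify the a.e.\ limit of the integrand on $\{f>0\}$. By Lemma~\ref{lem.local} applied at any Lebesgue point of $p$ and $w$, one has $\lambda(I_j^n)^{1/2}\mathcal{L}(I_j^n)\to\pi/s(x)$ whenever $I_j^n\downarrow x\in I^a$; combined with Lemma~\ref{lem.point}, which forces $\mathcal{L}$-a.e.\ point of $\{f>0\}$ to lie in $I^a$ and forces the local averages of $f_n$ to converge to $f$ there, this yields
$$
g_n(x) \;=\; \frac{1}{\lambda(I_j^n)^{1/2}\mathcal{L}(I_j^n)}\cdot\frac{1}{f_n(x)} \;\longrightarrow\; \frac{s(x)}{\pi\, f(x)} \qquad \text{a.e.\ on $\{f>0\}$.}
$$
By lower semicontinuity of $\varphi$ and Fatou's lemma one obtains
$$
\liminf_n \int_{\{f>0\}} \varphi(g_n)\,f_n\,dx \;\geq\; \int_I \varphi\!\left(\frac{s}{\pi f}\right)f\,dx,
$$
the right-hand side being harmlessly extended to $\{f=0\}$ via the convention $c\cdot 0=0$.

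The hard part will be extracting the remaining two terms $\varphi^\infty\mathcal{L}(\{f=0\})$ and $\varphi(0)\mu^s(\overline I)$ from what is left, namely the a.c.\ integral on $\{f=0\}$ together with $\varphi(0)\mu_n^s(\overline I)$. The subtlety is that $\mu^s$ may arise from two distinct mechanisms inside $\mu_n$: Dirac deltas from empty intervals (which feed $\varphi(0)\mu_n^s$ directly in the weak* limit), and concentration of the a.c.\ part on intervals with $n\mathcal{L}(I_j^n)\to 0$ (on which $g_n\to 0$, so $\varphi(g_n)\to\varphi(0)$ and the corresponding mass contributes $\varphi(0)$ times a singular piece to the integral); summing these must give precisely $\varphi(0)\mu^s(\overline I)$. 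As for the $\varphi^\infty$ term: when $\varphi^\infty=0$ it is vacuous by convention; when $\varphi^\infty=+\infty$, I would plug the affine minorants $\varphi(t)\geq Mt-\varphi^*(M)$ (for $M>0$) into the a.c.\ integral on $\{f=0\}$ to get
$$
\int_{\{f=0\}} \varphi(g_n)\,f_n\,dx \;\geq\; M\!\!\!\sum_{j:\,x_{j-1}<x_j}\!\!\! \frac{\mathcal{L}(I_j^n\cap\{f=0\})}{\lambda(I_j^n)^{1/2}\mathcal{L}(I_j^n)} \;-\; \varphi^*(M)\!\int_{\{f=0\}} f_n\,dx,
$$
apply Lemma~\ref{lem.linear} to the first sum (whose liminf is a positive multiple of $\mathcal{L}(\{f=0\})$ up to the constant $c$) and send $M\to\infty$ to recover $\varphi^\infty\mathcal{L}(\{f=0\})$, the remainder being finite since $\int f_n\leq 1$. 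Combining the three contributions yields $\liminf_n F_n(\mu_n)\geq F_\infty(\mu)$. The main technical obstacle is the careful bookkeeping on $\{f=0\}$: one must separate the mass that concentrates to form $\mu^s$ (charging the $\varphi(0)$ slot) from the Lebesgue portion on which $g_n$ blows up (charging the $\varphi^\infty$ slot), and show that no cancellation or double counting occurs between the a.c.\ integral and the $\varphi(0)\mu_n^s$ piece.
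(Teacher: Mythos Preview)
Your plan has a genuine gap in the treatment of $\{f>0\}$. The claim that $f_n(x)\to f(x)$ (and hence $g_n(x)\to s(x)/(\pi f(x))$) at $\mathcal L$-a.e.\ point of $\{f>0\}$ is false in general: weak* convergence $\mu_n\weak\mu$ does not force pointwise convergence of the piecewise-constant densities. A concrete counterexample with $p=w=1$, $q=0$: for $n$ even, partition $I$ into alternating intervals of lengths $2/(3n)$ and $4/(3n)$. Then $\mu_n\weak\mathcal L$ (so $f\equiv 1$), yet $f_n$ oscillates between $3/2$ and $3/4$ and $g_n$ between $2/(3\pi)$ and $4/(3\pi)$; neither converges a.e. Worse, Fatou applied to $\varphi(g_n)f_n$ yields only
\[
\int_I \min\Bigl(\tfrac32\,\varphi\bigl(\tfrac{2}{3\pi}\bigr),\ \tfrac34\,\varphi\bigl(\tfrac{4}{3\pi}\bigr)\Bigr)\,dx,
\]
which for $\varphi(t)=t^4$ equals $24/(81\pi^4)$, strictly smaller than the target $\varphi(1/\pi)=81/(81\pi^4)$. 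So the Fatou step cannot recover the first term of $\Fi(\mu)$; a convexity argument that averages over such oscillations is essential. Note also that Lemma~\ref{lem.point} only tells you that a.e.\ point of $\{f>0\}$ lies in $I^a$; it says nothing about pointwise convergence of $f_n$, and ``local averages of $f_n$ converge to $f$'' (which does follow from weak* convergence on fixed intervals) is not the statement you use in the displayed formula for $g_n(x)$.

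The paper circumvents this by introducing the energy measures $\nu_n:=\varphi(g_n)f_n\,\mathcal L+\varphi(0)\,\mu_n^s$, passing to a weak* limit $\nu$, and estimating the Radon--Nikodym density of $\nu$ with respect to $\mu_\epsilon:=(f+\epsilon)\mathcal L+\mu^s$ at each point $x_0$. The crucial step is Jensen's inequality at the scale of a test interval $J$:
\[
\nu_n(J)\ \geq\ \mu_n(J)\,\varphi\!\left(\frac{1}{\mu_n(J)}\sum_{j:\,x_{j-1}<x_j}\frac{\mathcal L(I_j^n\cap J)}{\la(I_j^n)^{1/2}\mathcal L(I_j^n)}\right),
\]
after which one lets $n\to\infty$ (using Lemma~\ref{lem.linear} for the argument of $\varphi$) and then $J\downarrow x_0$. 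This Jensen step is exactly what absorbs the oscillations your Fatou argument cannot handle. The same blow-up computation, together with the $\epsilon$-regularization, delivers the $\varphi^\infty$ term on $\{f=0\}$ and the $\varphi(0)$ term on the singular set simultaneously, so no separate bookkeeping is needed; in particular your direct appeal to Lemma~\ref{lem.linear} with the measurable set $\{f=0\}$ in place of an open interval $J$ is not licensed by that lemma as stated.
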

\begin{proof}
For a given $\mu\in\prob$ let $\mu=f\mathcal L+\mu^s$ be the Lebesgue decomposition of $\mu$ w.r.t. the Lebesgue measure, where $f\in L^1(I)$ with $f\geq0$ and $\mu^s$ its singular part. Given $\{\mu_n\}\subset\prob$ such that $\mu_n\weak\mu$, without loss of generality, we may consider an arbitrary subsequence $\{\mu_n\}$ (not relabelled) for which, 
by $\eqref{Fn}$,
\begin{equation*}
\liminf_{n\to\infty}\Fn(\mu_n)= \lim_{n\to\infty}\frac{1}{n}\sum_{1\leq j\leq n}\varphi\left(\frac{n}{\la(I_j^n)^{1/2}}\right)
\end{equation*}
exists, is finite and such that each $\mu_n$ has the form \eqref{meas} for some partition $\{I_j^n\}\in\mathcal C_n$ (since the inequality \eqref{liminf} is trivial along sequences $\{\mu_n\}_n$ for which the energy is always $+\infty$).
The Radon measures $\nu _n$ defined on $\overline I$ as 
\[
\nu_n=\frac{1}{n}\sum_{1\leq j\leq n\atop x_{j-1}<x_{j}}\varphi\left(\frac{n}{\la(I_j^n)^{1/2}}\right)\frac{1}{\mathcal L(I_j^n)}\chi_{I_j^n}(x) \mathcal L+\frac{1}{n}\sum_{1\leq j\leq n\atop x_{j-1}=x_{j}}\varphi(0)\delta_{x_j},
\]
satisfy $\sup_{n} \nu_n(\overline I)=\sup_{n}F_n(\mu_n)<+\infty$. This implies (see \cite[Section~1.9]{evagar}) the existence of a positive Radon measure $\nu$ defined on $\overline I$ and of a subsequence $\{\nu_n\}$ (not relabelled) for which $\nu_n\weak \nu$; in particular, 
since the measures are concentrated in $\overline I$, 
\begin{equation}\label{final}
\liminf_{n\to\infty}\Fn(\mu_n)=\lim_{n\to\infty}\nu_n(\overline I)= \nu(\overline I)\geq \int_{\overline I} g_\epsilon(x) d\mu_\epsilon, 
\end{equation}
where $g_\epsilon$ is the density of $\nu$ w.r.t the measure $\mu_\epsilon:=(f+\epsilon)\mathcal L+\mu^s$ for some $\epsilon>0$, see \cite[Section 1.6]{evagar} (adding $\epsilon$ saves possible regions where $f$ vanishes). It is then enough to estimate from below the density $g_\epsilon$ $\mu_\epsilon$-almost everywhere. To do this, we fix an open interval $J\subset \mathbb R$ such that its boundary $\partial J$ is not charged by $\mu$, namely $\mu(\overline J)=\mu(J)$. This is equivalent to require that $\partial J$ is not contained in the \emph{countable} set where the pure point part of $\mu$ is concentrated (note that this does not inquire constraints on the length of $J$). By $\mu_n\weak \mu$ 
\begin{equation}\label{equality}
\lim_{n\to\infty}\mu_n(J)=\mu(J)=\mu(\overline J),
\end{equation}
and by $\nu_n\weak\nu$ we have
\begin{equation}\label{estnu}
\limsup_{n\to\infty}\sum_{1\leq j\leq n\atop x_{j-1}<x_{j}}\frac{\mathcal L(I_j^n\cap J)}{n\mathcal L(I_j^n)}\varphi\bigg(\frac{n}{{\la(I_j^n)^{1/2}}}\bigg)+\frac{k}{n}\varphi\left(0\right)= \limsup_{n\to\infty}\nu_n(J)\leq \nu (\overline{J}),
\end{equation}
where $k$ is the number of intervals $I_j^n$'s in $J$ such that $x_{j-1}=x_{j}$. Since by \eqref{meas}
\[
\mu_n(J)=\sum_{1\leq j\leq n\atop x_{j-1}<x_{j}} \frac{\mathcal L(I_j^n\cap J)}{n\mathcal L(I_j^n)}+\frac{k}{n}>0,
\] 
Jensen inequality yields
\begin{equation}\label{mnmn}
\sum_{1\leq j\leq n\atop x_{j-1}<x_{j}}\!\!\!\!\!\!\frac{\mathcal L(I_j^n\cap J)}{n\mathcal L(I_j^n)}\varphi\Big(\frac{n}{{\la(I_j^n)^{1/2}}}\!\Big)\!+\!\frac{k}{n}\varphi\left(0\right)
\!\geq \!\mu_n(J)\varphi\Big(\frac{1}{\mu_n(J)}\!\!\!\!\!\sum_{1\leq j\leq n\atop x_{j-1}<x_{j}}\!\!\!\!\frac{\mathcal L(I_j^n\cap J)}{{\la(I_j^n)^{1/2}\mathcal L(I_j^n)}}\!\Big).
\end{equation}
Up to subsequences, we can assume the limits as $n\to\infty$ of \eqref{mnmn} and Lemma~\ref{lem.linear} to exist. Therefore, by letting $n\to\infty$ in \eqref{mnmn}, and by combining \eqref{estnu}, \eqref{equality}, and Lemma~\ref{lem.linear} with the continuity of $\varphi$ on $(0,+\infty)$, we obtain 
\[
\nu(\overline J)\geq \big(\mu(J)+\delta \mathcal L(J)\big)\varphi\left(\frac{1}{\mu(J)+\delta\mathcal L(J)}\frac{1}{\pi}\int_{I^a\cap J} s(x)dx+c_J\mathcal L(I^b\cap J)\right)-\epsilon,
\]
where $\delta>0$ is sufficiently small and serves to avoid vanishing quantities. 
Now, dividing by $\mu_\epsilon(\overline J)$ (which is always positive by definition) and letting $J$ shrink towards a $\mu_\epsilon$-Lebesgue point $x_0\in \overline I$ for $g_\epsilon$, from Radon-Nikodym theorem we can face three different situations:

\begin{itemize}
\item[1.] If $x_0$ is also a Lebesgue point for $p, w$ and $f$ with $f(x_0)>0$ then by Lemma~\ref{lem.point}, recalling the uniform bounds on the constant $c_J$ in Lemma~\ref{lem.linear},
\begin{equation*}
g_\epsilon(x_0)\geq \frac{f(x_0)+\delta}{f(x_0)+\epsilon}\,\varphi\left(\frac{s(x_0)}{\pi (f(x_0)+\delta)}\right)-\epsilon, 
\end{equation*}
and letting $\delta\to 0$ 
\begin{equation}\label{uno}
g_\epsilon(x_0)\geq \frac{f(x_0)}{f(x_0)+\epsilon}\,\varphi\left(\frac{s(x_0)}{\pi f(x_0)}\right)-\epsilon. 
\end{equation}

\item[2.] If instead $x_0$ is also a Lebesgue point for $p, w$ and $f$ with $f(x_0)=0$, up to subsequences as $J\downarrow x_0$, the right-hand side in Lemma~\ref{lem.linear} rescaled by $\mathcal L(J)$ converges to some constant $c>0$ such that $1/(\beta\sqrt{\pi^2+1})\leq c\leq \beta/\pi$, thanks to inequality \eqref{global} and Lemma~\ref{lem.local}. Then
\[
g_\epsilon(x_0)\geq \frac{\delta}{\epsilon}\,\varphi\left(\frac{c}{\delta}\right)-\epsilon,
\]
and letting $\delta\to 0$, from the fact that $\varphi^\infty$ is either $0$ or $+\infty$, we now obtain
\begin{equation}\label{due}
g_\epsilon(x_0)\geq \frac{1}{\epsilon}\varphi^\infty-\epsilon.
\end{equation}

\item[3.] If at $x_0$ the measure $\mu$ has Radon-Nikodym derivative $f(x_0)=+\infty$ then 
\begin{equation}\label{tre}
g_\epsilon(x_0)\geq \varphi(0)-\epsilon.
\end{equation}
\end{itemize}

Therefore, since a.e. $x_0\in I$ is a Lebesgue point for $p,w$ and $f$, while at $\mu^s$-a.e. point $x_0\in \overline I$ the Radon-Nikodym derivative $f$ is not finite (see \cite[Theorems 7.10 and 7.15]{rudin}), we can use \eqref{uno}, \eqref{due} and \eqref{tre} inside \eqref{final} to obtain
\[
\liminf_{n\to\infty}\Fn(\mu_n)\geq\int_I \varphi\left(\frac{s(x)}{\pi f(x)}\right) f(x)\, dx+\varphi^\infty\mathcal L(\{f=0\})+ \varphi(0)\mu^s(\overline I)-\epsilon\mu_\epsilon(\overline I),
\]
that is \eqref{liminf}, thanks to the bound $\mu_\epsilon(\overline I)\leq 1+\epsilon$ and the arbitrariness of $\epsilon$. 
\end{proof}

For the $\Gamma$-limsup inequality, we need to introduce  a special class of measures.
\begin{definition}[Piecewise constant measure]\label{def.step}
We say that a probability measure $\mu\in\prob$ is \emph{piecewise constant} if it is absolutely continuous w.r.t. the Lebesgue measure, with a density $f(x)\geq0$ constant on every interval $J_i:=((i-1)/m,i/m)$, with $i=1,\dots,m$, for some $m\in\mathbb N$. 
In formulae,
\begin{equation}\label{stepmeas}
\mu=f\mathcal L, \qquad f(x)= \sum_{1\leq i\leq m} \alpha_i \chi_{J_i}(x), \qquad \mathcal L(J_i)=1/m
\end{equation}
where the constants $\alpha_i\geq 0$ satisfy (since $\mu(I)=1$) the normalization condition 
\begin{equation*}
\sum_{1\leq i\leq m} \alpha_i= m.
\end{equation*}
Moreover, we also denote by $M_0$ the set of indeces corresponding to those intervals with $\alpha_i=0$ and $m_0=\sharp(M_0)$ its cardinality.
\end{definition}

Then to any piecewise constant measure we associate the following sequence.

\begin{definition}[The recovery sequence]\label{def.rec}
Consider a piecewise constant measure $\mu$, with the same notation as in Definition~\ref{def.step}.
We say that a sequence of probability measures $\{\mu_n\}$ in $\prob$ is a \emph{recovery sequence for $\mu$}, if for every $n\geq m$ (recall that $m$ is the characteristic parametric of $\mu$ defined in \eqref{stepmeas}) the following conditions are satisfied:
\begin{itemize}
\item[(i)] $\mu_n$ has the form  \eqref{meas} for some partition $\{I_j^n\}\in \mathcal C_n$ with $x_{j-1}< x_j$ for all $j=1,\dots n$ and
\[
\bigcup_{1\leq j\leq n}\partial I_j^n\supseteq \bigcup_{1\leq i\leq m}\partial J_i;
\]
\item[(ii)] when $i\in M_0$ there is only one interval of the partition $\{I_j^n\}$ in $J_i$ of \eqref{stepmeas} with $I_j^n=J_i$ (note that there are $m_0$ of such intervals of the partition), while when $i\notin M_0$  all the intervals of the partition $\{I_j^n\}$ in $J_i$ have same length and  the number $k_i$ of them is
\begin{equation}\label{kin}
k_i=k_i(n):= \left\lfloor \frac{\alpha_i}{m}(n-m_0)\right\rfloor + \gamma_i^{n},
\end{equation}
where $\gamma_i^{n}$ is a corrector factor, that is $0$ if the quantity $\alpha_in/ m$ is an integer else $0$ or $1$ accordingly to guarantee the condition
\begin{equation*}
\sum_{i\notin M_0} k_i=n-m_0.
\end{equation*}
\end{itemize}
\end{definition}

\begin{proposition}[$\Gamma$-limsup inequality]\label{proplimsup}
For every measure $\mu\in\prob$ there exists a sequence $\{\mu_n\}$ in $\prob$ such that $\mu_n\weak\mu$ and
\begin{equation}\label{limsup}
 \limsup_{n\to\infty} \Fn(\mu_n)\leq F_\infty(\mu).
\end{equation}
\end{proposition}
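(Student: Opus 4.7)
The plan is to first establish \eqref{limsup} on the dense subclass of piecewise constant measures by using the explicit recovery sequence of Definition \ref{def.rec}; the general case then follows by approximating an arbitrary $\mu$ by piecewise constant $\mu^{(k)}$ with $\mu^{(k)}\weak\mu$ and $\Fi(\mu^{(k)})\to\Fi(\mu)$, and extracting a diagonal sequence.

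For $\mu$ piecewise constant with data $(m,\{\alpha_i\},M_0)$ as in Definition \ref{def.step}, let $\{\mu_n\}$ be the recovery sequence of Definition \ref{def.rec}. Weak* convergence $\mu_n\weak\mu$ is straightforward: for $i\notin M_0$ the density of $\mu_n$ on $J_i$ equals $mk_i/n\to\alpha_i$ (since $k_i\sim\alpha_i n/m$), while for $i\in M_0$ the single interval $I_j^n=J_i$ carries mass $1/n\to 0$. I then split
\[
\Fn(\mu_n)=\sum_{i\in M_0}\frac{1}{n}\varphi\!\left(\frac{n}{\la(J_i)^{1/2}}\right)+\sum_{i\notin M_0}\frac{1}{n}\!\!\sum_{j:\,I_j^n\subset J_i}\!\!\varphi\!\left(\frac{n}{\la(I_j^n)^{1/2}}\right),
\]
and handle the two pieces separately. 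For $i\in M_0$ the quantity $\la(J_i)$ is a fixed finite number, so $n/\la(J_i)^{1/2}\to+\infty$ and the first sum tends to $\sum_{i\in M_0}\varphi^\infty/\la(J_i)^{1/2}$; since $\varphi^\infty\in\{0,+\infty\}$, this exactly matches the term $\varphi^\infty\mathcal L(\{f=0\})$ of $\Fi(\mu)$ (both zero when $\varphi^\infty=0$, both infinite when $\varphi^\infty=+\infty$ and $m_0>0$).

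For the second sum, the $k_i$ subintervals in $J_i$ have vanishing length $1/(mk_i)$, so Lemma \ref{lem.local} yields $\la(I_j^n)^{1/2}\mathcal L(I_j^n)\to\pi/s(x)$ at a.e.\ $x$, and together with $n\mathcal L(I_j^n)=n/(mk_i)\to 1/\alpha_i$ this gives $n/\la(I_j^n)^{1/2}\to s(x)/(\pi\alpha_i)$. The global bounds \eqref{global} keep the arguments of $\varphi$ in a fixed compact subinterval of $(0,+\infty)$, so by continuity of $\varphi$ and dominated convergence the discrete average becomes a Riemann-type limit, yielding
\[
\frac{1}{n}\!\!\sum_{j:\,I_j^n\subset J_i}\!\!\varphi\!\left(\frac{n}{\la(I_j^n)^{1/2}}\right)\to \alpha_i\int_{J_i}\varphi\!\left(\frac{s(x)}{\pi\alpha_i}\right)dx=\int_{J_i}\varphi\!\left(\frac{s(x)}{\pi f(x)}\right)f(x)\,dx.
\]
Summing over $i\notin M_0$ reconstructs the first term of $\Fi(\mu)$ and completes the piecewise constant case.

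The hard part will be the density step, since an arbitrary $\mu=f\mathcal L+\mu^s$ can carry a singular part and $\{f=0\}$ may be irregular. I plan to take $f_k$ piecewise constant on a dyadic refinement, defined as the mean of $f$ on each sub-interval, and to add tall narrow spikes of total mass $\mu^s(\overline I)$ to reproduce the singular part; since the perspective map $y\mapsto y\varphi(c/y)$ is convex, Jensen's inequality together with Lebesgue differentiation produce $\int\varphi(s/(\pi f_k))f_k\,dx\to\int\varphi(s/(\pi f))f\,dx$, while each narrow spike contributes $\varphi(0)\mu^s(\overline I)$ in the limit by essentially the same computation used in the $M_0$ case. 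The dichotomy $\varphi^\infty\in\{0,+\infty\}$ simplifies things further: if $\varphi^\infty=+\infty$ then $\Fi(\mu)<\infty$ forces $f>0$ a.e.\ and $\mu^s=0$, so only purely absolutely continuous approximants are needed; if $\varphi^\infty=0$ the $\{f=0\}$ term vanishes and the approximation becomes essentially direct.
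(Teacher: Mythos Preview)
Your treatment of the piecewise constant case is essentially identical to the paper's: same splitting over $M_0$ and its complement, same use of Lemma~\ref{lem.local} together with $n\mathcal L(I_j^n)\to 1/\alpha_i$, and the same observation that the $M_0$ contribution collapses to $\varphi^\infty\mathcal L(\{f=0\})$ thanks to the dichotomy $\varphi^\infty\in\{0,+\infty\}$.

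For the density step the paper takes a slightly different route from your spike construction: it defines the piecewise constant approximant directly by $\alpha_i^n\in[\mu(I_i^n)/\mathcal L(I_i^n),\,\mu(\overline{I_i^n})/\mathcal L(I_i^n)]$, so the singular part of $\mu$ is automatically absorbed into the step heights rather than carried by separate spikes. Both approaches rely on the convexity of the perspective $y\mapsto y\,\varphi(c/y)$ and Lebesgue differentiation to pass to the limit, and both ultimately use Jensen to manufacture an integrable dominant bounded by a multiple of $F_\infty(\mu)$; your plan is sound, though the paper's averaging of the full measure is slightly cleaner because it avoids having to place and calibrate the spikes.

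One genuine slip in your final paragraph: the condition $\varphi^\infty=+\infty$ forces $f>0$ a.e.\ when $F_\infty(\mu)<\infty$, but it does \emph{not} force $\mu^s=0$; that conclusion requires $\varphi(0)=+\infty$. For instance with $\varphi(t)=t^2$ one has $\varphi^\infty=+\infty$ yet $\varphi(0)=0$, so a nontrivial singular part is perfectly compatible with finite energy. Your main plan (spikes for $\mu^s$) already handles this correctly; just drop the erroneous shortcut.
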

\begin{proof}
We first prove the result when $\mu$ is piecewise constant according to Definition~\ref{def.step} and consider a recovery sequence $\{\mu_n\}$ for $\mu$ as  in Definition~\ref{def.rec}. For every $n\geq m$ the measure $\mu_n$ has the form \eqref{meas} for some partition $\{I_j^n\}\in \mathcal C_n$ which covers all the boundaries of the intervals $J_i$'s. For every $i\in M_0$ we have $\mu_n(J_i)=\frac{1}{n}\sim 0=\mu(J_i)$ as $n\to\infty$ while if $i\notin M_0$ by \eqref{kin} with \eqref{stepmeas} yields
\begin{equation*}
\mu_n({J}_i)= \frac{k_i}{n} \sim \frac{\alpha_i}{m}=\mu(J_i)\quad  \text{as $n\to\infty$},
\end{equation*}
and this guarantees that the probability measures $\mu_{n}\weak \mu$ as $n\to\infty$. Moreover, by \eqref{Fn},
\begin{equation}\label{gammasupw}
\limsup_{n\to\infty} F_n(\mu_n)=
\limsup_{n\to\infty} \frac{1}{n}\sum_{i\notin M_0}\!\sum_{1\leq j\leq k_i}\! \varphi\Big(\frac{n}{\la(I_{i_j}^n)^{1/2}}\Big)\!+\frac{1}{n}\sum_{i\in M_0}\!\varphi\Big(\frac{n}{\la(J_i)^{1/2}}\Big).
\end{equation} 
Note that, by {\eqref{stepmeas}} and \eqref{kin} again, for every $i\notin M_0$ and $j=1,\dots, k_i$
\begin{equation}\label{construction}
\mathcal L(I_{i_j}^n)=\frac{1}{mk_i}\sim \frac{1}{\alpha_i(n-m_0)} \quad \text{as $n\to\infty$}.
\end{equation}
To estimate the first term in the right-hand side of \eqref{gammasupw}, fix $i\notin M_0$  and consider the functions $g_n$ defined for every $x\in I$ as
\[
g_n(x):=\frac{1}{n}\sum_{1\leq j\leq k_i} \varphi\left(\frac{n}{\la(I_{i_j}^n)^{1/2}}\right)\frac{1}{\mathcal L(I_{i_j}^n)}\chi_{I_{i_j}^n}(x).
\]
By combining Lemma~\ref{lem.local} and \eqref{construction} with the Radon-Nikodym theorem yields
\[
\lim_{n\to\infty} g_n(x)=\alpha_i \varphi\left (\frac{s(x)}{\pi \alpha_i}\right),\quad \text{a.e. $x\in J_i$,}
\]
and, by the Lebesgue dominated convergence theorem (notice that by \eqref{s} the function $\varphi(s(x)/\pi\alpha_i)$ is of course in $L^1(J_i)$) we obtain
\begin{equation}\label{11}
\lim_{n\to\infty} \int_{J_i} g_n(x)dx=\lim_{n\to\infty}\frac{1}{n}\sum_{1\leq j\leq k_i} \varphi\left(\frac{n}{\la(I_{i_j}^n)^{1/2}}\right) =\int_{J_i}\varphi\left (\frac{s(x)}{\pi \alpha_i}\right) \alpha_i\, dx.
\end{equation}
Now to estimate the latter term in \eqref{gammasupw}, fix $i\in M_0$ and let $n\to\infty$. Since $\varphi^\infty$ is either $0$ or $+\infty$ we have
\begin{equation}\label{22}
\frac{1}{n}\sum_{i\in M_0}\!\varphi\bigg(\frac{n}{\la(J_i)^{1/2}}\bigg)= \left(\frac{m_0}{m}\right)\varphi^\infty.
\end{equation}
Therefore, summing $i$ from $1$ to $m$ \eqref{limsup} follows by combining \eqref{11} with \eqref{22}, thanks to the fact that $\mu$ is a measure with a density $f(x)$ piecewise constant according to \eqref{stepmeas}.

Now, the passage to general $\mu$ is standard. By classical results of $\Gamma$-convergence theory it is enough to prove the \emph{density in energy} of the measures with piecewise constant densities in the space of probability measures, namely that for every $\mu\in\prob$ there exists a sequence $\{\mu_n\}$ such that $\mu_n$ is as in Definition~\ref{def.step} for some $\{I_j^n\}$, $\mu_n\weak\mu$ and 
\begin{equation}\label{claim1}
\limsup_{n\to\infty} \Fi(\mu_n)\leq \Fi(\mu).
\end{equation} 
Indeed, if \eqref{claim1} holds, from the lower semicontinuity of the $\Gamma$-limsup functional $F_+\colon \prob\to  [0,+\infty]$ defined for every $\mu\in\prob$ as $F_+(\mu):=\inf\{\limsup F_n(\mu_n) : \: \mu_n\weak \mu\}$ (see \cite[Proposition 6.8]{dalmaso}) and \eqref{limsup} just proved for piecewise constant measures, it follows that given $\mu\in\prob$ there exists a sequence $\{\mu_n\}$ such that $\mu_n$ is as in Definition~\ref{def.step},  $\mu_n\weak \mu$ and
\[
F_+(\mu) \leq \liminf F_+(\mu_n) \leq \limsup F_\infty(\mu_n) \leq F_\infty(\mu).
\]
This proves the validity of \eqref{limsup} for an arbitrary measure $\mu\in\prob$.

Therefore, we only have to prove the density in energy of the piecewise constant measures in $\prob$. Consider an arbitrary measure $\mu \in\prob$ with density $f\in L^1(I)$ with respect to the Lebesgue measure. Keeping the notation of Definition~\ref{def.step}, we construct the measure $\mu_n\in\prob$ as follows:
\begin{equation*}
\mu_n=f_n(x)\mathcal L, \qquad f_n(x)= \sum_{1\leq i\leq n} \alpha_i^n \chi_{\In}(x), \quad \mathcal L(\In)=1/n
\end{equation*}
where the numbers $\alpha_i^n$ are chosen as to satisfy the conditions
\begin{equation}\label{conda}
\frac{\mu(\In)}{\mathcal L(\In)}\leq  \alpha_i^n \leq\frac{\mu(\overline{\In})}{\mathcal L(\In)},\qquad
\sum_{1\leq i\leq n} \alpha_i^n=n.
\end{equation}
Note that, for fixed $n$, the $\{\In\}$'s are a partition of $I$, and since $\mu_n$ is, by construction, a sort of sampling of $\mu$, it is easy to see that $\mu_n\mathop{\rightharpoonup}^*\mu$ as $n\to\infty$. Moreover, 
\[
\lim_{n\to\infty}\sum_{1\leq i\leq n} \alpha_i^n \varphi\left(\frac{s(x)}{\pi \alpha_i^n} \right)\chi_{I_i^n}(x)=\varphi\left(\frac{s(x)}{\pi f(x)}\right)f(x) \quad\text{a.e. $x\in\{f>0\}$}.
\]
For every $x\in \{f>0\}$ and every sequence of intervals $I_i^n\cap\{f>0\}\downarrow x$ as $n\to\infty$,
by \eqref{conda} we have $\lim_{n\to\infty}\alpha_i^n=f(x)>0$ and moreover $\lim_{n\to\infty} \intmed_{I_i^n\cap \{f>0\}} s(y) dy=s(x)$. Therefore, by the continuity of $\varphi$ there exists a constant $C>1$ such that for every $n$ sufficiently large
\[
\alpha_i^n \varphi\left(\frac{s(x)}{\pi \alpha_i^n} \right)\leq C\bigg(\intmed_{I_i^n\cap\{f>0\}} f\bigg)\varphi\left(\frac{\int_{I_i^n\cap\{f>0\}}(s(y)/(\pi f(y))f(y)dy}{\int_{I_i^n\cap\{f>0\}}f(y)dy} \right)
\]
which by Jensen inequality, recalling \eqref{Fi}, yields
\[
\alpha_i^n \varphi\left(\frac{s(x)}{\pi \alpha_i^n} \right)\leq C\intmed_{I_i^n\cap\{f>0\}}\varphi\left(\frac{s(y)}{\pi f(y)}\right)f(y)dy\leq C F_\infty(\mu).
\]
Now, if $F_\infty(\mu)=+\infty$ the inequality \eqref{limsup} is trivial. Otherwise $F_\infty(\mu)<+\infty$ and by the Lebesgue dominated convergence theorem
\[
\limsup_{n\to\infty}\sum_{1\leq i\leq n} \alpha_i^n\int_{I_i^n} \varphi\left(\frac{s(x)}{\pi \alpha_i^n} \right)dx= \int_I\varphi\left(\frac{s(x)}{\pi f(x)}\right)f(x)dx\leq F_\infty(\mu),
\] 
and \eqref{limsup} holds again. The proposition is then proved.
\end{proof}

\begin{proof}[Proofs of Theorem~\ref{theorem} and of Corollary~\ref{corollary}]
Theorem~\ref{theorem} follows immediately by Proposition~\ref{prop.liminf} and Proposition~\ref{proplimsup}, see \cite[Proposition 8.1]{dalmaso}. Corollary~\ref{corollary} is then a consequence of general results of $\Gamma$-convergence theory. Indeed the space of probability measures $\mathcal P(\overline I)$ is compact w.r.t. the weak* convergence. Moreover, by Jensen inequality it holds that
\[
F_\infty(\mu)\geq \varphi\left(\frac{\frac{1}{\pi}\int_{\{f>0\}}s(x)dx}{\int_I f(x) dx}\right) \int_I f(x) dx+\varphi^\infty \mathcal L(\{f=0\})+ \varphi(0)\mu_s(\overline I),
\] 
and equality holds if and only if $\varphi$ is linear on $\{f>0\}$ or  $\mu$ is absolutely continuous w.r.t. the Lebesgue measure with a density $f=c s(x)\chi_E(x)$ for some measurable set $E\subset I$ and constant $0\leq c\leq 1/\int_Es(x)dx$. By strict convexity of $\varphi$ the latter condition holds.
Therefore, by plugging this function into the right-hand side of the above inequality one gets the following lower bound
\begin{equation}\label{other}
F_\infty(\mu)\geq \varphi\left(\frac{\frac{1}{\pi}\int_Es(x)dx}{c\int_E s(x)dx}\right) c\int_E s(x)dx+\varphi^\infty \mathcal L(I\setminus E)+ \varphi(0)\mu_s(\overline I).
\end{equation}
Then we have to choose the set $E$ and the constant $c$ to minimize this lower bound. We have to face several situations. 

If $\varphi(0)<+\infty$ the fact that $\mu^s(\overline I)=1-\int_I f(x) dx$ allows to write \eqref{other} as
\begin{equation}\label{proofc}
F_\infty(\mu)\geq \left(\varphi\left(\frac{1}{\pi c}\right)-\varphi(0)\right) c\int_Es(x)dx+\varphi^\infty \mathcal L(I\setminus E)+ \varphi(0).
\end{equation}
Moreover, if also $\varphi^\infty=+\infty$ then necessarily $E=I$ (up to a negligible set) and \eqref{proofc} becomes
\begin{equation}\label{proofc2}
F_\infty(\mu)\geq  \left(\varphi\left(\frac{1}{\pi c}\right)-\varphi(0)\right) \pi c \frac{1}{\pi}\int_I s(x)dx + \varphi(0),
\end{equation}
where the right-hand side represents (up to the supplementary constant factors $\varphi(0)$ and ${1}/{\pi}\int_I s(x)dx$)  the slope of the secant line passing through the points with coordinates $(0, \varphi(0))$ and $(1/(\pi c), \varphi(1/(\pi c)))$. By convexity this is clearly minimized when $c$ is as large as possible, namely when $c=1/\int_I s(x)dx$.

If instead $\varphi^\infty=0$ (still under the assumption $\varphi(0)<+\infty$), by convexity $\varphi$ must be non-increasing. Then $(\varphi\left({1}/{(c\pi)}\right)-\varphi(0))\leq 0$ for all admissible constant $c$. By \eqref{proofc} it is again convenient to choose $E$ as large as possible, i.e. $E=I$ (up to a negligible set). Therefore, \eqref{proofc2} holds and as in the previous case we arrive to the same deduction $c=1/ \int_I s(x)dx$.

Now, it remains to consider the case where $\varphi(0)=+\infty$. In this case $\mu^s(\overline I)=0$, then $c\int_E s(x)dx=1$  and  \eqref{other} becomes
\begin{equation*}
F_\infty(\mu)\geq \varphi\left(\frac{1}{\pi}\int_E s(x)dx\right) +\varphi^\infty \mathcal L(I\setminus E).
\end{equation*}
If also $\varphi^\infty=+\infty$ then $E=I$ (up to a negligible set). If instead $\varphi^\infty=0$ then $\varphi$ is non-increasing and as noticed above the minimum is reached for $E=I$ (up to a negligible set).

In conclusion, the $\Gamma$-limit \eqref{Fi} is uniquely minimized by the measure $f_\infty \mathcal L$ with density as in \eqref{minimum}. Therefore, by applying \cite[Corollary 7.24]{dalmaso} we obtain the claim.
\end{proof}

\section{Examples}\label{sec.4}

The general framework introduced before applies to several concrete examples. We illustrate some of them in the following list, focusing on the different representations of $\Gamma$-limits.

\medskip
\noindent1. \emph{Optimal location problems.}
Let $p=w=1$ (then $s=1$) and $q=0$. By \eqref{explicit} problem \eqref{problem} can be written as an optimal partition problem of the lengths of the connected components of the partition, namely
\[
\min \bigg\{ \frac{1}{n}\sum_{1\leq j\leq n}\varphi\left(\frac{n\mathcal L(I_j)}{\pi}\right): \ \{I_j\}\in \mathcal C_n \bigg\}.
\]
This is also related to the so called optimal location problems, see for instance \cite{suzdre, suzoka} where the points have to be located in order to minimize some cost functional depending on the size of the partition. In this setting the $\Gamma$-limit \eqref{Fi} becomes
\[
\Fi(\mu)=\int_I \varphi\left(\frac{1}{\pi f(x)}\right) f(x)\, dx+\varphi^\infty \mathcal L(\{f=0\})+ \varphi(0)\mu^s(\overline I).
\]
and thus, as expected, one find the uniform distribution as minimizer (see \eqref{minimum}).
We point out that in this particular situation it should not be difficult to extend the validity of Theorem~\ref{theorem} also to functions $\varphi$ with linear growth, i.e., the case where $0<\varphi^\infty<+\infty$.

\medskip
\noindent2. \emph{Minimal partitions for positive powers of eigenvalues}.
Let  $r\in (0,+\infty)$ and consider $\varphi(x)=1/x^{2r}$. Then the minimization problem \eqref{problem} reduces to
\[
n^{-2r-1}\min \bigg\{\sum_{1\leq j\leq n} {\la(I_j)^r}: \ \{I_j\}\in \mathcal C_n \bigg\},
\]
where by homogeneity of $\varphi$ the quantity ${n^{-2r-1}}$ can be factorized.
In this case $\varphi^\infty=0$ and $\varphi(0)=+\infty$ thus the $\Gamma$-limit \eqref{Fi} takes the following form
\[
\Fi(\mu)=
\begin{cases}
\displaystyle \pi^{2r} \int_I \frac{f(x)^{2r+1}}{s(x)^{2r}}\, dx,  \quad &\text{if $\mu=f\mathcal L$ with $f\in L^1(I)$,}\\
+\infty \quad & \text{otherwise.}
\end{cases}
\]

\medskip
\noindent3. \emph{Minimal partitions for powers of reciprocal eigenvalues.}
Let  $r\in (1,+\infty)$ and consider $\varphi(x)=x^{2r}$. Then the minimization problem \eqref{problem} becomes
\[
n^{2r-1}\min \bigg\{ \sum_{1\leq j\leq n} \frac{1}{\la(I_j)^r} : \ \{I_j\}\in \mathcal C_n \bigg\},
\]
where as before by homogeneity the quantity ${n^{2r-1}}$ can be factorized. 
Now since $\varphi^\infty=~+\infty$ and $\varphi(0)=0$ the $\Gamma$-limit \eqref{Fi} is
\[
\Fi(\mu)=\frac{1}{\pi^{2r}} \int_I \frac{s(x)^{2r}}{f(x)^{2r-1}}\, dx. 
\]
Notice that this expression encodes the non finiteness of the $\Gamma$-limit when $f$ vanishes somewhere, on a set of positive measure.

\medskip
\noindent4. \emph{Minimal partitions for non-monotone convex functions.}
Let $\varphi(x)=(x-a)^2+b$ for suitable constants $a,b>0$ (such that the assumptions on $\varphi$ stated in the introduction are satisfied). Then the minimization problem \eqref{problem} becomes
\[
\min\bigg\{ \frac{1}{n} \sum_{1\leq j\leq n} \left(\frac{n}{\la(I_j)^{1/2}}-a\right)^2+b : \ \{I_j\}\in \mathcal C_n \bigg\}.
\]
Namely, we would all the eigenvalues $\{\la(I_j)\}$ be as close as possible to $(n/a)^2$, but the fact that the $I_j$'s must be a partition of $I$ imposes a supplementary constraint to the optimization problem. However, still in this case the asymptotics is not affected by the function $\varphi$, which now has a minimum inside $(0,+\infty)$. Moreover, since $\varphi^\infty=+\infty$ and $\varphi(0)=a^2+b$, the $\Gamma$-limit \eqref{Fi} is
\[
\Fi(\mu)=
\displaystyle \frac{1}{\pi^2}\int_I \frac{\left(s(x)-a\pi f(x)\right)^2}{f(x)}\, dx+a^2\mu^s(\overline I)+b, 
\]
and again the fact that $f=0$ somewhere implies the divergence of the integral.

\medskip
\noindent5. \emph{The trace of the heat operator}. If $\varphi(x)=e^{1/x^2}$ then problem \eqref{problem} becomes
\[
\min\bigg\{\frac{1}{n} \sum_{1\leq j\leq n} e^{\la(I_j)/n^2}: \ \{I_j\}\in \mathcal C_n \bigg\}.
\]
Solutions of this problem can be seen as good approximations of the trace of the heat Sturm-Liouville operator near the origin (see \cite{elshar}). Since $\varphi^\infty=0$ and $\varphi(0)=+\infty$, by Theorem~\ref{theorem} one obtains that
\[
\Fi(\mu)=
\begin{cases}
\int_I e^{\left({\pi^2 f(x)^2/s(x)^2}\right)} f(x)\, dx,  \quad &\text{if $\mu=f\mathcal L$ with $f\in L^1(I)$,}\\
+\infty \quad & \text{otherwise.}
\end{cases}
\]

\subsection*{Acknowledgements}
An anonymous referee who carefully read the paper and suggested clarifying remarks is kindly acknowledged.\\
This work is supported by the Research Project INdAM for Young Researchers (Starting Grant) \emph{Optimal Shapes in Boundary Value Problems} and by the INdAM-GNAMPA Project 2018 \emph{Ottimizzazione Geometrica e Spettrale}.

\end{document}